\theoremstyle{plain}
\newtheorem{theorem}{Theorem}[section]
\newtheorem{lemma}[theorem]{Lemma}
\theoremstyle{definition}
\newtheorem{definition}[theorem]{Definition}
\theoremstyle{remark}
\newtheorem{remark}{Remark}
\def\R{\mathbb{R}}
\newcommand{\E}{{\mathbb E}}
\def\X{\mathcal X}
\def\R{\mathbb R}
\def\E{\mathbb E}
\def\PP{\mathbb P}
\def\lm{\lambda}
\def\e{\varepsilon}
\def\la{\langle}
\def\ra{\rangle}
\def\y{\mathbf{y}}
\def\x{\mathbf{x}}
\def\b{\mathbf{b}}
\def\one{{\mathbf 1}}
\def\avg#1{{#1}}
\def\ag#1{{#1}} 
\def\dm#1{{#1}} 
\newcommand{\ddd}[1]{\textcolor{cyan}{#1}} 
\begin{document}
\articletype{ARTICLE TEMPLATE}

\title{Stochastic Approximation versus Sample Average Approximation for  Wasserstein barycenters
}

\author{
\name{Darina Dvinskikh\textsuperscript{a,b,c}\thanks{CONTACT Darina Dvinskikh. Email: dviny.d@yandex.ru} }
\affil{
\textsuperscript{a}Weierstrass Institute for Applied Analysis and Stochastics, Berlin, Germany;\\
\textsuperscript{b}Moscow Institute of Physics and Technology, Dolgoprudny, Russia;\\
\textsuperscript{c}Institute for Information Transmission Problems, Moscow, Russia.
}
}

\maketitle

\begin{abstract}

In the machine learning and optimization community, there are two main approaches for the convex risk minimization problem, namely, the Stochastic Approximation (SA) and the Sample Average Approximation (SAA).
In terms of oracle complexity (required number of stochastic gradient evaluations), both approaches are considered equivalent on average (up to a logarithmic factor). The total complexity depends on the specific problem, however, starting from work \cite{nemirovski2009robust} it was generally accepted that the SA is better than the SAA.
 We show that for the Wasserstein barycenter problem this superiority can be inverted. We provide a detailed comparison by stating the complexity bounds for  the SA and the SAA implementations calculating barycenters  defined with respect to optimal transport distances and entropy-regularized optimal transport distances. As a byproduct, we also construct  confidence intervals for the barycenter  defined with respect to  entropy-regularized optimal transport distances in the $\ell_2$-norm.

The preliminary results are derived for a general convex optimization problem given by the expectation in order to have other applications besides the Wasserstein barycenter problem.

\end{abstract}
\begin{keywords}
empirical risk minimization, stochastic approximation, sample average approximation, Wasserstein barycenter, Fr\'{e}chet mean, stochastic gradient descent, mirror descent.
\end{keywords}

\section{Introduction}

In this paper, we consider the problem of finding  a barycenter of discrete random probability measures generated by  a  distribution. We refer to 
optimal transport (OT) metrics which provides a successful framework to compare objects that can be modeled  as probability measures (images, videos, texts and etc.).  Transport based distances have gained popularity in various fields such as statistics \cite{ebert2017construction,bigot2012consistent}, unsupervised learning \cite{arjovsky2017wasserstein}, signal and image analysis \cite{thorpe2017transportation},  computer vision \cite{rubner1998metric}, text classification \cite{kusner2015word}, economics and finance \cite{rachev2011probability} and medical imaging \cite{wang2010optimal,gramfort2015fast}. 
Moreover, a lot of 
 statistical results are known about optimal transport  distances \cite{sommerfeld2018inference,weed2019sharp,klatt2020empirical}.

The  success of optimal transport  led to an increasing interest in  {Wasserstein barycenters} (WB's).
Wasserstein barycenters are used in Bayesian computations \cite{srivastava2015wasp}, texture mixing \cite{rabin2011wasserstein}, clustering  ($k$-means for probability measures) \cite{del2019robust}, shape interpolation and color transferring \cite{Solomon2015},  statistical estimation of template models \cite{boissard2015distribution} and neuroimaging \cite{gramfort2015fast}.
For discrete random probability measures from probability simplex $\Delta_n$ ($n$ is the size of support) with distribution $\PP$, a Wasserstein barycenter is introduced through a notion of  Fr\'{e}chet mean \cite{frechet1948elements}
\begin{equation}\label{def:Freche_population}
  \min_{p\in  \Delta_n}\E_{q \sim \PP} W(p,q).
\end{equation}
If a solution of \eqref{def:Freche_population} exists and is unique, then it is referred to as the population barycenter for distribution $\PP$. Here  $W(p,q)$ is 
 optimal transport metrics between measures $p$ and  $q$  
\begin{equation}\label{def:optimaltransport}
  W(p,q) =   \min_{\pi \in U(p,q)} \la C, \pi \ra,
\end{equation}
where $C  \in \R^{n\times n}_+$ is a symmetric transportation cost matrix and $U(p,q) \triangleq\{ \pi\in \R^{n\times n}_+: \pi \one =p, \pi^T \one = q\}$ is transport polytope.\footnote{
When for $\rho\geq 1$, $C_{ij} =\mathtt d(x_i, x_j)^\rho$  in \eqref{def:optimaltransport}, where $\mathtt d(x_i, x_j)$ is a distance on support points $x_i, x_j$, then $W(p,q)^{1/\rho}$ is known as the $\rho$-Wasserstein distance.
Nevertheless, all the results of this thesis  are based only on the assumptions that the  matrix $C \in \R_+^{n\times n}$ is symmetric and non-negative. Thus, optimal transport  problem defined in \eqref{def:optimaltransport} is    a more general  than the  Wasserstein distances.}

In \cite{cuturi2013sinkhorn}, the  entropic regularization of optimal transport problem \eqref{def:optimaltransport}  was proposed to   improve its statistical properties   \cite{klatt2020empirical,bigot2019central} and to  reduce the computational complexity  from $\tilde O(n^3)$  ($n$ is the size of the support of the measures) to $n^2\min\{\tilde O\left(\frac{1}{\e} \right), \tilde O\left(\sqrt{n} \right) \} $ arithmetic operations\footnote{The estimate $n^2\min\{\tilde O\left(\frac{1}{\e} \right), \tilde O\left(\sqrt{n} \right) \}$ is the best known theoretical  estimate for solving OT problem \cite{blanchet2018towards,jambulapati2019direct,lee2014path,quanrud2018approximating}. The best  known practical  estimates are $\sqrt{n}$ times worse (see \cite{guminov2019accelerated} and references therein).}
\begin{align}\label{eq:wass_distance_regul2222}
W_\gamma (p,q) &\triangleq \min_{\pi \in  U(p,q)} \left\lbrace \left\langle  C,\pi\right\rangle - \gamma E(\pi)\right\rbrace.
\end{align}
Here $\gamma>0$ and $E(\pi) \triangleq -\la \pi,\log \pi \ra $ is the  entropy. Since $E(\pi)$ is 1-strongly concave on $\Delta_{n^2}$ in the  $\ell_1$-norm, the objective in \eqref{eq:wass_distance_regul2222} is $\gamma$-strongly convex  with respect to $\pi$ in the $\ell_1$-norm on $\Delta_{n^2}$, and hence, problem \eqref{eq:wass_distance_regul2222} has a unique optimal solution. Moreover, $W_\gamma (p,q)$ is $\gamma$-strongly convex with respect to $p$ in the $\ell_2$-norm on $\Delta_n$ \citep[Theorem 3.4]{bigot2019data}.
Another particular advantage of the entropy-regularized  optimal transport \eqref{eq:wass_distance_regul2222} is a closed-form representation for its dual function~\cite{agueh2011barycenters,cuturi2016smoothed} defined by the Fenchel--Legendre transform of $W_\gamma(p,q)$ as a function of $p$
\begin{align*}
     W_{\gamma, q}^*(u) &=  \max_{ p \in \Delta_n}\left\{ \la u, p \ra - W_{\gamma}(p, q) \right\} =  \gamma\left(E(q) + \left\langle q, \log (K \beta) \right\rangle  \right).
\end{align*}
  where  $\beta = \exp( {u}/{\gamma}) $, \mbox{$K = \exp( {-C}/{\gamma }) $} and functions $\log$ or $\exp$ are  applied element-wise.
Hence, the gradient of dual function $W_{\gamma, q}^*(u)$ is also represented in  a closed-form \cite{cuturi2016smoothed}
\begin{equation*}
\nabla W^*_{\gamma,q} (u)
= \beta \odot \left(K \cdot {q}/({K \beta}) \right) \in \Delta_n,
\end{equation*}
where symbols $\odot$ and $/$ stand for the element-wise product and element-wise division respectively.

\textbf{Background on the SA and the SAA and Convergence Rates.}
Let us consider a general  stochastic convex minimization problem
\begin{equation}\label{eq:gener_risk_min}
\min_{x\in X \subseteq \mathbb{R}^n} F(x) \triangleq \E f(x,\xi),    
\end{equation}
where function $f$ is convex in $x$ ($x\in X,$ $X$ is a convex set), and $\E f(x,\xi)$ is the expectation of $f$ with respect to $\xi \in \Xi$.
Such kind of problems arise in many applications of data science 
\cite{shalev2014understanding,shapiro2014lectures} (e.g., risk minimization) and mathematical statistics \cite{spokoiny2012parametric} (e.g., maximum likelihood estimation).  There are two competing  approaches based on Monte Carlo sampling techniques to solve \eqref{eq:gener_risk_min}: the Stochastic Approximation (SA) \cite{robbins1951stochastic} and the Sample Average Approximation (SAA).  
The SAA approach  replaces the objective in problem \eqref{eq:gener_risk_min} with its sample average approximation (SAA) problem
\begin{equation}\label{eq:empir_risk_min}
 \min_{x\in X} \hat{F}(x) \triangleq \frac{1}{m}\sum_{i=1}^m f(x,\xi_i), 
\end{equation}
where  $\xi_1, \xi_2,...,\xi_m$ are the realizations of a random variable  $\xi$. The number of realizations $m$ is adjusted by the desired precision.
The total working time of both approaches  to solve  problem \eqref{eq:gener_risk_min} with the average precision $\e$ in the non-optimality gap in term of the objective function (i.e., to find  $x^N$ such that $\E F(x^N) - \min\limits_{x\in X } F(x)\leq \e$),
depends on the specific problem. However, it was generally accepted \cite{nemirovski2009robust} that the SA approach is better than the SAA approach.
  Stochastic gradient (mirror) descent, an implementation of the SA approach \cite{juditsky2012first-order}, gives the following estimation for the number of iterations (that is equivalent to the sample size of $\xi_1, \xi_2, \xi_3,...,\xi_m$)
\begin{equation}\label{eq:SNSm}
m = O\left(\frac{M^2R^2}{\e^2}\right).
\end{equation}
Here we considered the minimal assumptions (non-smoothness) for the objective $f(x,\xi)$
\begin{equation}\label{M}
\|\nabla f(x,\xi)\|_2^2\le M^2, \quad \forall x \in X, \xi \in \Xi.
\end{equation}
Whereas, the application of the SAA approach requires  the following sample size \cite{shapiro2005complexity}
\[m = \widetilde{O}\left(\frac{n M^2R^2}{\e^2}\right),\]
that is $n$ times more ($n$ is the problem’s dimension) than the sample size in the SA approach. This estimate was obtained under the assumptions that problem \eqref{eq:empir_risk_min} is solved exactly. This is one of the main drawback of the SAA approach. However, if the objective $f(x,\xi)$ is $\lm$-strongly convex in $x$, the sample sizes are equal  up to logarithmic terms
\[
m = O\left(\frac{M^2}{\lm \e}\right).
\]
Moreover, in this case, for the SAA approach, it suffices to solve problem \eqref{eq:empir_risk_min}  with accuracy \cite{shalev2009stochastic}
 \begin{equation}\label{eq:aux_e_quad}
   \e' = O\left(\frac{\e^2\lm}{M^2}\right).
 \end{equation}
 Therefore, to eliminate the linear dependence on  $n$ in the SAA approach for a non-strongly convex objective, regularization $\lm =\frac{\e}{R^2}$ should be used \cite{shalev2009stochastic}.

Let us suppose that $f(x,\xi)$ in  \eqref{eq:gener_risk_min} is convex but non-strongly convex in $x$ (possibly, $\lm$-strongly convex but with very small $\lm \ll \frac{\e}{R^2}$). Here  $R = \|x^1 - x^*\|_2$ is the Euclidean distance between starting point $x^1$ and the solution $x^*$ of \eqref{eq:gener_risk_min} which corresponds to the minimum of this norm (if the solution is not the only one). Then,  the problem \eqref{eq:gener_risk_min} can be replaced by 
\begin{equation}\label{eq:gener_risk_min_reg}
\min_{x\in X }  \E f(x,\xi) + \frac{\e}{2R^2}\|x - x^1\|_2^2.
\end{equation}
 The empirical counterpart of  \eqref{eq:gener_risk_min_reg}  is 
\begin{equation}\label{eq:empir_risk_min_reg}
 \min_{x\in X} \frac{1}{m}\sum_{i=1}^m f(x,\xi_i) + \frac{\e}{2R^2}\|x - x^1\|_2^2,   
\end{equation}
where the sample size $m$ is defined in   \eqref{eq:SNSm}
Thus, in the case of non-strongly objective, a regularization equates the sample size of both approaches.


\subsection{Contribution and Related  Work}

 \hspace{0.25cm} \textbf{The SA and the SAA approaches}. 
 This paper is inspired by the work \cite{nemirovski2009robust}, where it is stated that the SA approach outperforms the SAA approach for a certain class of convex stochastic problems. Our aim is to show that for the Wasserstein barycenter problem this superiority can be inverted. We provide a detailed comparison by stating the complexity bounds for implementations of the SA and the SAA approaches for the Wasserstein barycenter problem.  As a byproduct, we also construct a confidence interval for the barycenter  defined w.r.t. entropy-regularized OT.

 \textbf{Sample size.} We also estimate the sample size of measures to calculate an approximation for Fr\'{e}chet mean of a probability distribution with a given precision.

 \textbf{Consistency and rates of convergence.}
The consistency of empirical barycenter as an estimator of  true Wasserstein barycenter (defined by the notion of Fr\'{e}chet mean) as the number of measures  tends to infinity  was studied in many papers, e.g, \cite{LeGouic2017,panaretos2019statistical,LeGouic2017,Bigot2012a,rios2018bayesian}, 
under  some conditions for the process generated the measures.
Moreover, the authors of \cite{boissard2015distribution} provide the rate of this convergence but under  restrictive assumption on the process (it must be from 
{admissible family of deformations}, i.e., it is  a gradient of a convex  function).   Without any assumptions on generating process, the rate of convergence was obtained in \cite{bigot2018upper}, however, only for measures with one-dimensional support. For some specific types of metrics and measures, the rates of convergence  were also provided in works \cite{chewi2020gradient,gouic2019fast,kroshnin2019statistical}. Our results were obtained  under the condition of discreteness of the measures. We can always  achieve this condition  through additional preprocessing (discretization of measures).

 

 \textbf{Penalization of  barycenter problem.} 
 For a general convex (but not strongly convex) optimization problem,  empirical minimization may fail  in offline approach despite the guaranteed success of an online approach if no regularization was introduced \cite{shalev2009stochastic}. 
 The limitations of the SAA approach for non-strongly convex case are also discussed in \cite{guigues2017non-asymptotic,shapiro2005complexity}. 
Our contribution  includes  introducing a new regularization for population Wasserstein barycenter problem that improves the complexity bounds for standard  penalty (squared norm penalty) \cite{shalev2009stochastic}.  This regularization relies on the  Bregman divergence from \cite{ben-tal2001lectures}.





\subsection{Preliminaries}

\noindent\textbf{Notations}. 
Let  $\Delta_n  = \{ a \in \mathbb{R}_+^n  \mid \sum_{l=1}^n a_l =1 \}$ be the probability simplex. 
Then we refer 
to the $j$-th component of  vector $x_i$ as $[x_i]_j$.
The notation $[n]$ means $1,2,...,n$.
For two vectors $x,y$ of the same size, denotations $x/y$ and $x \odot y$  stand for the element-wise product and  element-wise division respectively. 
When functions, such as $log$ or $exp$, are used on vectors, they are always applied element-wise.
For some norm $\|\cdot\|$ on space $\X$, we define the dual norm $\|\cdot\|_*$ on the dual space $\X^*$ in a usual way $ \|s\|_{*} = \max\limits_{x\in \X} \{ \la x,s \ra : \|x\| \leq 1 \} $. 
 We denote by $I_n$ the identity matrix, and by $0_{n\times n}$ we denote zeros matrix.
For a positive semi-definite matrix $A$ we denote its  smallest positive eigenvalue by $\lm^{+}_{\min}(A)$.
We  use denotation $ O(\cdot)$ when we want to indicate the complexity hiding constants, to hide also  logarithms, we use denotation $\widetilde O(\cdot)$.


\begin{definition}
A function $f(x,\xi):X \times \Xi \rightarrow \R$ is $M$-Lipschitz continious in $x$ w.r.t. a norm $\|\cdot\|$ if it satisfies
\[{|}f(x,\xi)-f(y,\xi){|}\leq M\|x-y\|, \qquad \forall x,y \in X,~ \forall \xi \in \Xi.\]
\end{definition}

\begin{definition}
A function $f:X\times \Xi \rightarrow \R$ is $\gamma$-strongly convex in $x$ w.r.t. a norm $\|\cdot\|$ if it is continuously differentiable and it satisfies
\[f(x, \xi)-f(y, \xi)- \la\nabla f(y, \xi), x-y\ra\geq \frac{\gamma}{2}\|x-y\|^2, \qquad \forall x,y \in  X,~ \forall \xi \in \Xi.\]
\end{definition}
\begin{definition}
The Fenchel--Legendre conjugate for a function $f:(X, \Xi) \rightarrow \R$ w.r.t. $x$ is \[f^*(u,\xi) \triangleq \sup_{x \in X}\{\la x,u\ra - f(x,\xi)\}, \qquad  \forall \xi \in \Xi.\]
\end{definition}

\subsection{Paper organization}
The structure of the paper is the following. 
In Section \ref{ch:population} 
we give a background on  
the SA and the SAA approaches and derive  preliminary results.
 Section \ref{sec:pen_bar} presents the comparison of the SA and the SAA approaches for the problem of Wasserstein barycenter defined w.r.t. regularized optimal transport distances. Finally, Section \ref{sec:unreg} gives the comparison of the SA and the SAA approaches for the problem of Wasserstein barycenter defined w.r.t. (unregularized) optimal transport distances.

\section{Strongly Convex Optimization Problem}
\label{ch:population}

We start with preliminary results stated for a general stochastic strongly convex  optimization problem  of form
\begin{equation}\label{eq:gener_risk_min_conv}
\min_{x\in X \subseteq \mathbb{R}^n} F(x) \triangleq \E f(x,\xi),
\end{equation}
where $f(x,\xi)$ is $\gamma$-strongly convex with respect to $x$. Let us define   $ x^* = \arg\min\limits_{x\in X} {F}(x)$.

\subsection{The SA Approach: Stochastic Gradient Descent }

The classical SA algorithm for problem  \eqref{eq:gener_risk_min_conv} is presented by  stochastic gradient descent (SGD) method. We consider the SGD with inexect oracle  given by $g_\delta(x,\xi)$ such that
\begin{equation}\label{eq:gen_delta}
\forall x \in X, \xi \in \Xi, \qquad
\|\nabla f(x,\xi) - g_\delta(x,\xi)\|_2 \leq \delta.
\end{equation}
Then the iterative  formula of SGD can be written as ($k=1,2,...,N.$)
\begin{equation}\label{SA:implement_simple}
  x^{k+1} = \Pi_{X}\left(x^k - \eta_{k} g_\delta(x^k,\xi^k) \right). 
\end{equation}
Here $x^1 \in X$ is  starting point, $\Pi_X$ is the projection onto $X$, $\eta_k$ is a stepsize.
For a $\gamma$-strongly convex $f(x,\xi)$ in $x$, stepsize $\eta_k$ can be taken as $\frac{1}{\gamma k}$ to obtain  optimal rate $O(\frac{1}{\gamma N})$.

A good indicator of the success of an algorithm is the \textit{regret}
 \[Reg_N \triangleq \sum_{k=1}^{N} \left(f( x^k,  \xi^k) -  f(x^*,  \xi^k)\right).\]
It measures the value of the difference between a made decision and the optimal decision on all the rounds.
The work \cite{kakade2009generalization} gives a  bound on the excess risk of the output of an online algorithm in terms of the average regret.

\begin{theorem}\citep[Theorem 2]{kakade2009generalization} \label{Th:kakade2009generalization}
Let  $f:X\times \Xi \rightarrow [0,B]$ be $\gamma$-strongly convex and $M$-Lipschitz w.r.t. $x$. Let $\tilde x^N \triangleq \frac{1}{N}\sum_{k=1}^{N}x^k $ 
be the average of online vectors $x^1, x^2,...,x^N$.
Then with probability at least $1-4\beta\log N$
\[ F(\tilde x^N) - F(x^*) \leq \frac{Reg_N }{N} + 4\sqrt{ \frac{M^2\log(1/\beta)}{\gamma}}\frac{\sqrt{Reg_N}}{N} + \max\left\{ \frac{16M^2}{\gamma},6B \right\}\frac{\log(1/\beta)}{N}.  \]
\end{theorem}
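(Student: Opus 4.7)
The plan is to run a standard online-to-batch conversion combined with a Freedman-type martingale concentration, using strong convexity to bound the conditional variance of the martingale differences in terms of the excess-risk sum itself, and closing the loop with a self-bounding inequality. The main obstacle is that Freedman's inequality requires an a-priori bound on the cumulative conditional variance, which we do not have; this forces a peeling (stratification) argument over dyadic scales, and it is this step that produces the $\log N$ factor sitting in front of $\beta$ in the failure probability.

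\textbf{Step 1: reduction via Jensen.} Since $F$ is convex (being the expectation of convex functions),
\begin{equation*}
F(\tilde x^N) - F(x^*) \;\le\; \frac{1}{N}\sum_{k=1}^{N}\bigl(F(x^k)-F(x^*)\bigr) \;=\; \frac{Reg_N}{N} + \frac{1}{N}\sum_{k=1}^{N} D_k,
\end{equation*}
where $D_k \triangleq \bigl(F(x^k)-F(x^*)\bigr) - \bigl(f(x^k,\xi^k)-f(x^*,\xi^k)\bigr)$. Because $x^k$ is measurable with respect to $\xi^1,\dots,\xi^{k-1}$, the sequence $\{D_k\}$ is a martingale difference sequence with respect to the natural filtration, and $|D_k|\le 2B$ since $f\in[0,B]$.

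\textbf{Step 2: variance control by strong convexity.} I would bound the conditional second moment of $D_k$ by the $M$-Lipschitz property and the standard quadratic growth consequence of $\gamma$-strong convexity, $\|x^k-x^*\|_2^2\le \frac{2}{\gamma}(F(x^k)-F(x^*))$, to get
\begin{equation*}
\mathbb{E}[D_k^2\mid \xi^1,\dots,\xi^{k-1}] \;\le\; M^2\|x^k-x^*\|_2^2 \;\le\; \frac{2M^2}{\gamma}\bigl(F(x^k)-F(x^*)\bigr).
\end{equation*}
Summing, the cumulative conditional variance satisfies $V_N \le \frac{2M^2}{\gamma}\,A_N$ with $A_N \triangleq \sum_{k=1}^N (F(x^k)-F(x^*))$, i.e.\ the variance is itself dominated by the quantity we want to bound.

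\textbf{Step 3: Freedman with peeling.} I would apply Freedman's inequality conditionally on $V_N$ lying in each of the dyadic intervals $[2^{j-1} V_0, 2^j V_0]$ for $j=0,1,\dots,\lceil \log_2 N\rceil$, with $V_0$ chosen as the trivial lower bound. A union bound over the $O(\log N)$ intervals (each failing with probability $\beta$) yields that, with probability at least $1-4\beta\log N$,
\begin{equation*}
\Bigl|\sum_{k=1}^N D_k\Bigr| \;\le\; 2\sqrt{\tfrac{2M^2}{\gamma}\,A_N\,\log(1/\beta)} \;+\; \max\!\Bigl\{\tfrac{16M^2}{\gamma},\,6B\Bigr\}\log(1/\beta).
\end{equation*}
This is the step where the $4\beta\log N$ confidence level is generated; the constants $4$ and $6$ come from absorbing the union-bound factor and the leading constant in Freedman's tail, respectively.

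\textbf{Step 4: self-bounding closure and conclusion.} Plugging the bound of Step 3 into the decomposition of Step 1 yields
\begin{equation*}
A_N \;\le\; Reg_N + 2\sqrt{\tfrac{2M^2}{\gamma}A_N\log(1/\beta)} + \max\!\Bigl\{\tfrac{16M^2}{\gamma},6B\Bigr\}\log(1/\beta),
\end{equation*}
which is a quadratic-in-$\sqrt{A_N}$ inequality of the form $A_N \le \alpha + \sqrt{\beta' A_N}$. Solving it (via $\sqrt{A_N}\le \tfrac12\sqrt{\beta'}+\sqrt{\alpha+\beta'/4}$ and $\sqrt{u+v}\le \sqrt{u}+\sqrt{v}$) gives $A_N \le Reg_N + 4\sqrt{(M^2/\gamma)\,Reg_N\log(1/\beta)} + \max\{16M^2/\gamma,6B\}\log(1/\beta)$ up to the stated constants. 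Dividing by $N$ and combining with Step 1 produces the displayed inequality. The only delicate quantitative bookkeeping is choosing the peeling endpoints so that the constants $4$, $16$, and $6$ in the statement are exactly recovered.
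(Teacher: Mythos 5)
The paper does not prove this statement at all: it is imported verbatim as \citep[Theorem 2]{kakade2009generalization}, so there is no in-paper proof to compare against. Your proposal is a faithful and essentially correct reconstruction of the original Kakade--Tewari argument — the Jensen/online-to-batch decomposition into $Reg_N$ plus a martingale, the variance bound $\E[D_k^2\mid\mathcal F_{k-1}]\le \frac{2M^2}{\gamma}(F(x^k)-F(x^*))$ via Lipschitzness and quadratic growth, Freedman's inequality with dyadic peeling (their probabilistic lemma, which is where the $4\beta\log N$ failure probability arises), and the self-bounding quadratic closure (their final lemma) are exactly the four ingredients of that proof; the only loose end is the constant bookkeeping (e.g.\ a stray $\sqrt{2}$ in the cross term), which you correctly flag as the delicate part.
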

For the update rule \eqref{SA:implement_simple} with $\eta_k = \frac{1}{\gamma k}$, this theorem can be specify as follows.

\begin{theorem}\label{Th:contract_gener}
Let  $f:X\times \Xi \rightarrow [0,B]$ be $\gamma$-strongly convex and $M$-Lipschitz w.r.t. $x$. Let $\tilde x^N \triangleq \frac{1}{N}\sum_{k=1}^{N}x^k $ be the average of outputs generated by iterative formula \eqref{SA:implement_simple} with $\eta_k = \frac{1}{\gamma k}$. Then,  with probability  
at least $1-\alpha$  the following holds
\begin{align*}
    F(\tilde x^N) - F(x^*) 
    &\leq  \frac{3\delta D }{2}   + \frac{3(M^2+\delta ^2)}{N\gamma }(1+\log N)  \notag\\
    &+ \max\left\{ \frac{18M^2}{\gamma},6B + \frac{2M^2}{\gamma} \right\}\frac{\log(4\log N/\alpha)}{N}.
\end{align*}
where $D =\max\limits_{x',x'' \in X}\|x'-x''\|_2 $ and $\delta$ is defined by \eqref{eq:gen_delta}.
\end{theorem}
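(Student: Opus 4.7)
The plan is to specialize Theorem \ref{Th:kakade2009generalization} by producing an explicit high-probability bound on $Reg_N$ for the inexact SGD iteration \eqref{SA:implement_simple} with $\eta_k = 1/(\gamma k)$, and then to substitute it back. First I would set the confidence parameter in Theorem \ref{Th:kakade2009generalization} to $\beta = \alpha/(4\log N)$, so that $1-4\beta\log N = 1-\alpha$ and $\log(1/\beta) = \log(4\log N/\alpha)$; this accounts for the form of the logarithm in the claimed tail.

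Next I would derive the regret bound deterministically (conditioning on the sample path). Starting from non-expansiveness of the projection $\Pi_X$ I would expand
\begin{equation*}
\|x^{k+1}-x^*\|_2^2 \le \|x^k-x^*\|_2^2 - 2\eta_k\la g_\delta(x^k,\xi^k),x^k-x^*\ra + \eta_k^2\|g_\delta(x^k,\xi^k)\|_2^2,
\end{equation*}
and then combine with $\gamma$-strong convexity,
\begin{equation*}
f(x^k,\xi^k) - f(x^*,\xi^k) \le \la \nabla f(x^k,\xi^k),x^k-x^*\ra - \tfrac{\gamma}{2}\|x^k-x^*\|_2^2,
\end{equation*}
splitting $\nabla f = g_\delta + (\nabla f - g_\delta)$ and controlling the error term by $\delta\|x^k-x^*\|_2\le \delta D$ via \eqref{eq:gen_delta} and the diameter $D$. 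Using $\|g_\delta\|_2^2\le 2M^2+2\delta^2$ (from the Lipschitz bound and \eqref{eq:gen_delta}), with $\eta_k = 1/(\gamma k)$ the telescoping sum $\sum_{k=1}^N\bigl[\tfrac{\gamma k}{2}\|x^k-x^*\|_2^2 - \tfrac{\gamma(k+1)}{2}\|x^{k+1}-x^*\|_2^2\bigr]$ has nonpositive contribution (the $-\gamma/2\|x^k-x^*\|_2^2$ term cancels the mismatch), and $\sum_{k=1}^N 1/k\le 1+\log N$ yields
\begin{equation*}
Reg_N \le \frac{(M^2+\delta^2)(1+\log N)}{\gamma} + \delta D\,N.
\end{equation*}

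Finally I would plug this into Theorem \ref{Th:kakade2009generalization}. The only nontrivial manipulation is the middle $\sqrt{Reg_N}$ term: applying $2\sqrt{ab}\le a+b$ with $a=Reg_N/N$ and $b=M^2\log(1/\beta)/\gamma$ gives
\begin{equation*}
4\sqrt{\tfrac{M^2\log(1/\beta)}{\gamma}}\,\tfrac{\sqrt{Reg_N}}{N} \le 2\tfrac{Reg_N}{N} + \tfrac{2M^2\log(1/\beta)}{\gamma N},
\end{equation*}
absorbing the cross term into the $Reg_N/N$ contribution (turning the leading coefficient from $1$ into $3$) and into the final $\log(1/\beta)/N$ piece (which is what pushes $16M^2/\gamma$ to $18M^2/\gamma$ and $6B$ to $6B+2M^2/\gamma$ in the max). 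Substituting the regret bound then produces the stated inequality term by term.

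The only genuine obstacle is verifying that the telescoping argument for the $\|x^k-x^*\|_2^2$ terms works cleanly under the choice $\eta_k=1/(\gamma k)$ in the presence of the $-\tfrac{\gamma}{2}\|x^k-x^*\|_2^2$ strong-convexity term; everything else is bookkeeping on $\delta$, $M$, $D$, and the $\log$ factors. A secondary care point is that the randomness has already been folded into Theorem \ref{Th:kakade2009generalization}'s martingale bound, so the deterministic regret control above is sufficient and no additional concentration argument is required.
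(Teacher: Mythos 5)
Your proposal follows the paper's proof essentially verbatim: the same deterministic regret bound for the inexact projected SGD iterates via non-expansiveness of $\Pi_X$, strong convexity with the gradient split $\nabla f = g_\delta + (\nabla f - g_\delta)$, the bound $\|g_\delta\|_2^2\le 2M^2+2\delta^2$ and the harmonic sum, followed by the same application of Theorem \ref{Th:kakade2009generalization} with $\beta=\alpha/(4\log N)$ and the AM--GM absorption of the $\sqrt{Reg_N}$ term into the coefficients $3$, $18M^2/\gamma$ and $6B+2M^2/\gamma$. The only (immaterial) discrepancy is the constant on the bias term: your regret bound carries $\delta D N$ where the paper records $\tfrac{1}{2}\delta D N$, so you would land on $3\delta D$ rather than $\tfrac{3}{2}\delta D$ in the final display, but the approach and all other terms match.
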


\begin{proof}  
The proof mainly relies on  Theorem \ref{Th:kakade2009generalization} and estimating the regret for iterative formula \eqref{SA:implement_simple} with $\eta_k = \frac{1}{\gamma k}$.

From $\gamma$-strongly convexity in  $x$ of $f(x,\xi)$,  it follows for $x^k, x^* \in X$
\begin{equation*}
   f(x^*, \xi^k) \geq f(x^k,  \xi^k) + \la \nabla f(x^k, \xi^k), x^*-x^k\ra +\frac{\gamma}{2}\|x^*-x^k\|_2. 
\end{equation*}
Adding and subtracting the term  $\la g_\delta(x^k, \xi^k), x^*-x^k\ra$ we get using Cauchy–Schwarz inequality and \eqref{eq:gen_delta} 
\begin{align}\label{str_conv_W1}
   f(x^*,  \xi^k) &\geq f(x^k,  \xi^k) + \la g_\delta(x^k,\xi^k), x^*-x^k\ra +\frac{\gamma}{2}\|x^*-x^k\|_2   \notag \\
    &+ \la \nabla f(x^k,  \xi^k) -g_\delta(x^k,\xi^k), x^*-x^k\ra \notag\\
    &\geq f(x^k,  \xi^k) + \la g_\delta(x^k,\xi^k), x^*-x^k\ra + 
    \frac{\gamma}{2}\|x^*-x^k\|_2 + \delta\|x^*-x^k\|_2.
\end{align}
From the update rule \eqref{SA:implement_simple} for $x^{k+1}$  we have
\begin{align*}
   \|x^{k+1} - x^*\|_2 &= \|\Pi_{X}(x^k - \eta_{k} g_\delta(x^k,\xi^k)) - x^*\|_2 \notag\\
   &\leq \|x^k - \eta_{k} g_\delta(x^k,\xi^k) - x^*\|_2 \notag\\
   &\leq \|x^k  - x^*\|_2^2 + \eta_{k}^2\| g_\delta(x^k,\xi^k)\|_2^2 -2\eta_{k}\la g_\delta(x^k,\xi^k), x^k  - x^*\ra.
\end{align*}
From this it follows
\begin{equation*}
    \la g_\delta(x^k,\xi^k), x^k  - x^*\ra \leq \frac{1}{2\eta_{k}}(\|x^k-x^*\|^2_2 - \|x^{k+1} -x^*\|^2_2) + \frac{\eta_{k}}{2}\| g_\delta(x^k,\xi^k)\|_2^2.
\end{equation*}
Together with \eqref{str_conv_W1} we get
\begin{align*}
     f(x^k,  \xi^k) -  f(x^*,  \xi^k) &\leq  \frac{1}{2\eta_{k}}(\|x^k-x^*\|^2_2 - \|x^{k+1} -x^*\|^2_2)   \notag \\
     &-\left(\frac{\gamma}{2}+\delta\right)\|x^*-x^k\|_2+ \frac{\eta_{k}^2}{2}\| g_\delta(x^k,\xi^k)\|_2^2.
\end{align*}
Summing this from 1 to $N$,    we get using $\eta_k = \frac{1}{\gamma k}$
\begin{align}\label{eq:eq123}
 \sum_{k=1}^{N}f( x^k,  \xi^k) -  f(x^*,  \xi^k) &\leq 
     \frac{1}{2}\sum_{k=1}^{N}\left(\frac{1}{\eta_k} -  \frac{1}{\eta_{k-1}} + {\gamma} +\delta \right)\|x^*-x^k\|_2 \notag\\ &\hspace{-1cm}+\frac{1}{2}\sum_{k=1}^{N}{\eta_{k}}\| g_\delta (x^k,\xi^k)\|_2^2 \notag\\
     &\hspace{-1cm}\leq \frac{\delta}{2}\sum_{k=1}^{N}\|x^*-x^k\|_2 +\frac{1}{2}\sum_{k=1}^{N}{\eta_{k}}\| g_\delta (x^k,\xi^k)\|_2^2.
 \end{align}
 From Lipschitz continuity of $f(x,\xi)$ w.r.t. to $x$ it follows that $\|\nabla f(x,\xi)\|_2\leq M$ for all $x \in X, \xi \in \Xi$. Thus, using that for all $a,b, ~ (a+b)^2\leq 2a^2+2b^2$ it follows
 \[
 \|g_\delta(x,\xi)\|^2_2 \leq 2\|\nabla f(x,\xi)\|^2_2 + 2\delta^2 = 2M^2 + 2\delta^2
 \]
 From this and \eqref{eq:eq123} we bound the regret as follows
 \begin{align}\label{eq:reg123}
 Reg_N \triangleq \sum_{k=1}^{N}f( x^k,  \xi^k) -  f(x^*,  \xi^k)   &\leq  \frac{\delta}{2}\sum_{k=1}^{ N}\|p^*-p^k\|_2 + (M^2 +\delta^2)\sum_{k=1}^{ N} \frac{1}{\gamma k}\notag\\
     &\leq \frac{1}{2}  \delta D N + \frac{M^2+\delta ^2}{\gamma }(1+\log N). 
 \end{align}
Here the last bound takes place due to the sum of harmonic series.
Then for \eqref{eq:reg123} we can use Theorem \ref{Th:kakade2009generalization}. Firstly, we simplify it rearranging the terms using that $\sqrt{ab} \leq \frac{a+b}{2}$ 
\begin{align*}
    F(\tilde x^N) - F(x^*) 
    &\leq \frac{Reg_N }{N} + 4\sqrt{ \frac{M^2\log(1/\beta)}{N\gamma}}\sqrt{\frac{Reg_N}{N}} + \max\left\{ \frac{16M^2}{\gamma},6B \right\}\frac{\log(1/\beta)}{N}
    \notag\\
    &\leq \frac{3 Reg_N }{N} + \frac{2 M^2\log(1/\beta)}{N\gamma} + \max\left\{ \frac{16M^2}{\gamma},6B \right\}\frac{\log(1/\beta)}{N}\notag\\
    &= \frac{3 Reg_N }{N}  + \max\left\{ \frac{18M^2}{\gamma},6B + \frac{2M^2}{\gamma} \right\}\frac{\log(1/\beta)}{N}.
\end{align*}
Then we substitute \eqref{eq:reg123} in this inequality and making change $\alpha = 4\beta\log N$ and get with probability at least $ 1-\alpha$
\begin{align*}
    F(\tilde x^N) - F(x^*) 
    &\leq  \frac{3\delta D }{2}   + \frac{3(M^2+\delta ^2)}{N\gamma }(1+\log N)  \notag\\
    &+ \max\left\{ \frac{18M^2}{\gamma},6B + \frac{2M^2}{\gamma} \right\}\frac{\log(4\log N/\alpha)}{N}.
\end{align*}
\end{proof}

\subsection{Preliminaries on the SAA Approach }
The SAA approach replaces the objective in  \eqref{eq:gener_risk_min_conv} with its sample average 
\begin{equation}\label{eq:empir_risk_min_conv}
 \min_{x\in X } \hat{F}(x) \triangleq \frac{1}{m}\sum_{i=1}^m f(x,\xi_i),  
\end{equation}
where each $f(x,\xi_i)$ is $\gamma$-strongly convex in $x$.
Let us define the empirical minimizer of \eqref{eq:empir_risk_min_conv} $\hat x^* = \arg\min\limits_{x\in X} \hat{F}(x)$, and   $\hat x_{\e'}$  such that
\begin{equation}\label{eq:fidelity}
\hat{F}(\hat x_{\e'}) - \hat{F}(\hat x^*)  \leq \e'.
\end{equation}
The next theorem  gives a  bound on the excess risk for problem \eqref{eq:empir_risk_min_conv}
in the SAA approach.
\begin{theorem}\label{Th:contractSAA}
Let  $f:X\times \Xi \rightarrow [0,B]$ be $\gamma$-strongly convex and $M$-Lipschitz w.r.t. $x$ in the $\ell_2$-norm. Let $\hat x_{\e'}$ satisfies  \eqref{eq:fidelity}  with precision $ \e' $. 
Then,  with probability at least $1-\alpha$  we have
\begin{align*}
F( \hat x_{\e'}) -  F(x^*)
   &\leq \sqrt{\frac{2M^2}{\gamma}\e'}  +\frac{4M^2}{\alpha\gamma m}. 
\end{align*}
Let $\e' = O \left(\frac{\gamma\e^2}{M^2}  \right)$ and $m = O\left( \frac{M^2}{\alpha \gamma \e} \right)$. Then, with probability  at least $1-\alpha$  the following holds
\[F( \hat x_{\e'}) -  F(x^*)\leq \e \quad \text{and} \quad \|\hat x_{\e'} - x^*\|_2 \leq \sqrt{2\e/\gamma}.\]
\end{theorem}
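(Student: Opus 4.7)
My plan is to decompose the excess risk of the approximate empirical minimizer $\hat x_{\e'}$ into the \emph{optimization error} (gap between $\hat x_{\e'}$ and the exact empirical minimizer $\hat x^*$) and the \emph{generalization error} (gap between $\hat x^*$ and the population minimizer $x^*$):
\[
F(\hat x_{\e'}) - F(x^*) = \underbrace{\bigl[F(\hat x_{\e'}) - F(\hat x^*)\bigr]}_{\text{optimization}} + \underbrace{\bigl[F(\hat x^*) - F(x^*)\bigr]}_{\text{generalization}}.
\]

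For the optimization term, since each $f(\cdot,\xi_i)$ is $\gamma$-strongly convex, so is the average $\hat F$. Strong convexity together with the fidelity assumption \eqref{eq:fidelity} gives $\tfrac{\gamma}{2}\|\hat x_{\e'}-\hat x^*\|_2^2 \le \hat F(\hat x_{\e'})-\hat F(\hat x^*)\le \e'$, hence $\|\hat x_{\e'}-\hat x^*\|_2\le \sqrt{2\e'/\gamma}$. The $M$-Lipschitz property of $f$ passes to $F$, so $F(\hat x_{\e'})-F(\hat x^*) \le M\|\hat x_{\e'}-\hat x^*\|_2 \le \sqrt{2M^2\e'/\gamma}$.

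For the generalization term, I would invoke the classical stability bound for ERM over $\gamma$-strongly convex, $M$-Lipschitz losses from \cite{shalev2009stochastic}, which yields
\[
\E\bigl[F(\hat x^*)-F(x^*)\bigr] \le \frac{4M^2}{\gamma m}.
\]
Because $x^*$ minimizes $F$ over $X$ and $\hat x^*\in X$, the random variable $F(\hat x^*)-F(x^*)$ is non-negative, so Markov's inequality converts the in-expectation bound into the high-probability statement $F(\hat x^*)-F(x^*)\le \tfrac{4M^2}{\alpha \gamma m}$ with probability at least $1-\alpha$. Adding the two pieces delivers the first displayed inequality of the theorem.

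For the second part, substituting $\e'= O(\gamma \e^2/M^2)$ makes the optimization term $O(\e)$, and $m=O(M^2/(\alpha\gamma\e))$ makes the generalization term $O(\e)$, so $F(\hat x_{\e'})-F(x^*)\le \e$. The distance bound follows immediately from $\gamma$-strong convexity of $F$ applied at $x^*$: $\tfrac{\gamma}{2}\|\hat x_{\e'}-x^*\|_2^2\le F(\hat x_{\e'})-F(x^*)\le \e$, so $\|\hat x_{\e'}-x^*\|_2\le \sqrt{2\e/\gamma}$.

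The main obstacle is the generalization term: the in-expectation bound $\E[F(\hat x^*)-F(x^*)]\le 4M^2/(\gamma m)$ is not proved in the present excerpt and must be imported from the stability analysis of \cite{shalev2009stochastic}. The Markov conversion is crude and responsible for the $1/\alpha$ (rather than $\log(1/\alpha)$) dependence, which matches the statement but is noticeably weaker than the high-probability bound obtained for the SA approach in Theorem~\ref{Th:contract_gener}.
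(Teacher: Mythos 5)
Your proposal is correct and follows essentially the same route as the paper: the same optimization/generalization decomposition, the same Lipschitz-plus-strong-convexity bound $\sqrt{2M^2\e'/\gamma}$ on the optimization term, and the same import of the $\frac{4M^2}{\alpha\gamma m}$ stability bound from \cite{shalev2009stochastic} (which the paper cites directly as a high-probability statement, whereas you reconstruct it from the in-expectation version via Markov --- the same derivation used in that reference). The concluding distance bound via strong convexity of $F$ at $x^*$ also matches the paper's argument.
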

The proof of this theorem mainly relies on  the following theorem.
\begin{theorem}\citep[Theorem 6]{shalev2009stochastic}\label{Th:shalev2009stochastic}
Let $f(x,\xi)$ be $\gamma$-strongly convex and $M$-Lipschitz  w.r.t. $x$ in the $\ell_2$-norm.  Then, with probability at least $1-\alpha$ the following holds
\[
F(\hat x^* ) - F(x^*) \leq \frac{4M^2}{\alpha \gamma m},
\]
where $m$ is the sample size.
\end{theorem}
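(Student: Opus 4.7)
The plan is to decompose the excess risk of the approximate empirical minimizer into two pieces and bound each separately:
\[
F(\hat x_{\e'}) - F(x^*) = \underbrace{F(\hat x_{\e'}) - F(\hat x^*)}_{\text{(I): approximation gap}} + \underbrace{F(\hat x^*) - F(x^*)}_{\text{(II): generalization gap}}.
\]
For part (II) I would invoke Theorem \ref{Th:shalev2009stochastic} directly, giving $F(\hat x^*) - F(x^*) \leq \frac{4M^2}{\alpha\gamma m}$ with probability at least $1-\alpha$.

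For part (I), the key is that the empirical objective $\hat F$ inherits $\gamma$-strong convexity from $f(\cdot,\xi_i)$ (being an average of $\gamma$-strongly convex functions). Since $\hat x^*$ minimizes $\hat F$, strong convexity yields
\[
\hat F(\hat x_{\e'}) - \hat F(\hat x^*) \geq \frac{\gamma}{2}\|\hat x_{\e'} - \hat x^*\|_2^2,
\]
and combining with the fidelity condition \eqref{eq:fidelity} gives $\|\hat x_{\e'} - \hat x^*\|_2 \leq \sqrt{2\e'/\gamma}$. Then I would use that $F$ is itself $M$-Lipschitz (since $\|\nabla F(x)\|_2 = \|\E\nabla f(x,\xi)\|_2 \le \E\|\nabla f(x,\xi)\|_2 \le M$ by Jensen and the Lipschitz assumption) to transfer the distance bound back to a function-value bound:
\[
F(\hat x_{\e'}) - F(\hat x^*) \leq M\|\hat x_{\e'} - \hat x^*\|_2 \leq M\sqrt{\tfrac{2\e'}{\gamma}} = \sqrt{\tfrac{2M^2\e'}{\gamma}}.
\]
Adding the two bounds gives the first displayed inequality of the theorem.

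For the second half, plugging in $\e' = O(\gamma\e^2/M^2)$ makes the first term $O(\e)$ and $m = O(M^2/(\alpha\gamma\e))$ makes the second term $O(\e)$; choosing the hidden constants so that each contributes at most $\e/2$ gives $F(\hat x_{\e'}) - F(x^*) \leq \e$. Finally, to obtain the distance bound $\|\hat x_{\e'} - x^*\|_2 \leq \sqrt{2\e/\gamma}$ I would apply $\gamma$-strong convexity of $F$ at the minimizer $x^*$, which yields $F(\hat x_{\e'}) - F(x^*) \geq \frac{\gamma}{2}\|\hat x_{\e'} - x^*\|_2^2$, and invert this using the just-established bound $F(\hat x_{\e'}) - F(x^*) \leq \e$.

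There is no genuinely hard step here: everything is a combination of strong convexity (used twice, to pass between function values and distances) with the already-cited high-probability bound of Theorem \ref{Th:shalev2009stochastic}. The only subtle point is noting that $F$ is $M$-Lipschitz even though Lipschitzness was assumed only on $f(\cdot,\xi)$ pointwise in $\xi$; this follows by Jensen's inequality applied to the gradient, as indicated above.
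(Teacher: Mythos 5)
There is a genuine gap here: you have not proved the stated theorem at all. The statement in question is precisely the bound
\[
F(\hat x^*) - F(x^*) \leq \frac{4M^2}{\alpha\gamma m}
\]
for the \emph{exact} empirical minimizer $\hat x^*$, and in your decomposition you dispose of exactly this quantity (your ``part (II): generalization gap'') by ``invoking Theorem \ref{Th:shalev2009stochastic} directly'' --- that is, by citing the very result you were asked to establish. Everything else in your write-up concerns the approximate minimizer $\hat x_{\e'}$ and is, in substance, a correct reconstruction of the proof of Theorem \ref{Th:contractSAA} (the decomposition \eqref{eq:exp_der}, the Lipschitz-plus-strong-convexity transfer between function values and distances, and the final tuning of $\e'$ and $m$ all match the paper's argument for that theorem). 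But that is a different, downstream statement; as a proof of Theorem \ref{Th:shalev2009stochastic} your argument is circular. Note that the paper itself offers no proof of this theorem either --- it is imported verbatim from \cite{shalev2009stochastic}.

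If you wanted to actually prove it, the needed ingredient is a stability argument, not a decomposition. The standard route (the one in the cited reference) is: (i) show that the ERM map is uniformly replace-one stable --- if one sample $\xi_i$ is swapped, $\gamma$-strong convexity of $\hat F$ forces the two empirical minimizers to be within $O\bigl(M/(\gamma m)\bigr)$ of each other in the $\ell_2$-norm, so by $M$-Lipschitzness the loss at any point changes by at most $4M^2/(\gamma m)$; (ii) deduce $\E\bigl[F(\hat x^*) - \hat F(\hat x^*)\bigr] \leq 4M^2/(\gamma m)$, and combine with $\E\bigl[\hat F(\hat x^*)\bigr] \leq \E\bigl[\hat F(x^*)\bigr] = F(x^*)$ to get the expectation bound $\E\bigl[F(\hat x^*) - F(x^*)\bigr] \leq 4M^2/(\gamma m)$; (iii) since $F(\hat x^*) - F(x^*) \geq 0$, apply Markov's inequality to convert this into the stated high-probability bound, which is where the $1/\alpha$ factor comes from. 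None of these steps appears in your proposal.
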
 
\begin{proof}[Proof of Theorem \ref{Th:contractSAA}]

For any $x\in X$, the following holds
\begin{equation}\label{eq:exp_der}
 F(x) -  F(x^*) =    F(x) -   F(\hat x^*)+ F( \hat x^*)- F(x^*). 
\end{equation}
From  Theorem \ref{Th:shalev2009stochastic}  with probability  at least $1 - \alpha$  the following holds
\begin{equation*}
F( \hat x^*) -  F(x^*) \leq  \frac{4M^2}{\alpha \gamma m}.
\end{equation*}
Then from this and \eqref{eq:exp_der}  we have  with probability  at least $1 - \alpha$
\begin{equation}\label{eq:eqtosub}
 F(x) -  F(x^*) \leq F(x) -   F(\hat x^*)+  \frac{4M^2}{\alpha \gamma m}. 
\end{equation}
From Lipschitz continuity of $ f(x,\xi)$ it follows, that for ant $x\in X, \xi\in \Xi$ the following holds
\begin{equation*}
| f(x,\xi) - f(\hat x^*,\xi)| \leq M\|x-\hat x^*\|_2.
\end{equation*}
Taking the expectation of this inequality w.r.t. $\xi$ we get 
\begin{equation*}
   \E | f(x,\xi) - f(\hat x^*,\xi)| \leq M\|x-\hat x^*\|_2.
\end{equation*}
Then we use Jensen's inequality ($g\left (\E(Y)\right) \leq \E g(Y) $) for the expectation,  convex function $g$ and  a random variable $Y$. Since the module is a convex function we get
\begin{equation*}
 | \E f(x,\xi) - \E f(\hat x^*,\xi)| =| F(x) -  F(\hat x^*)| \leq   \E | f(x,\xi) - f(\hat x^*,\xi)| \leq M\|x-\hat x^*\|_2.
\end{equation*}
Thus, we have 
\begin{equation}\label{eq:Lipsch}
   | F(x) -  F(\hat x^*)| \leq M\|x-\hat x^*\|_2.
\end{equation}
From strong convexity of $ f( x, \xi)$ in $x$, it follows that the average of $f(x,\xi_i)$'s, that is $\hat F(x)$, is also $\gamma$-strongly convex in $x$. Thus we get for any $x \in X, \xi \in \Xi$
\begin{equation}\label{eq:str}
\|x-\hat x^*\|_2 \leq \sqrt{\frac{2}{\gamma } (\hat F(x) - \hat F(\hat x^*))}.
\end{equation}
By using \eqref{eq:Lipsch} and \eqref{eq:str}  \label{eq:th6} and taking $x=\hat x_{\e'}$ in \eqref{eq:eqtosub},  we get the first statement of the theorem
\begin{align}\label{eq_to_prove_2}
 F( \hat x_{\e'}) -  F(x^*)
  &\leq   \sqrt{\frac{2M^2}{\gamma }(\hat F( \hat x_{\e'}) -  \hat F(\hat x^*))} +\frac{4M^2}{\alpha\gamma m} \leq \sqrt{\frac{2M^2}{\gamma}\e'}  +\frac{4M^2}{\alpha\gamma m}. 
\end{align}
Then from the strong convexity we have
\begin{align}\label{eq:con_reg_bar}
\| \hat x_{\e'} - x^*\|_2 
  &\leq  \sqrt{\frac{2}{\gamma}\left( \sqrt{\frac{2M^2}{\gamma }\e'} +\frac{4M^2}{\alpha \gamma m}\right)}. 
\end{align}
Equating \eqref{eq_to_prove_2} to $\e$, we get the expressions for the sample size $m$ and auxiliary precision $\e'$. Substituting both of these expressions in  \eqref{eq:con_reg_bar} we finish the proof.

\end{proof}

\section{Non-Strongly Convex Optimization Problem}
Now we  consider non-strongly convex    optimization problem 
\begin{equation}\label{eq:gener_risk_min_nonconv}
\min_{x\in X \subseteq \mathbb{R}^n} F(x) \triangleq \E f(x,\xi),
\end{equation}
where $f(x,\xi)$ is Lipschitz continuous  in $x$. Let us define   $ x^* = \arg\min\limits_{x\in X} {F}(x)$.

\subsection{The SA  Approach: Stochastic Mirror Descent}
We consider stochastic mirror descent (MD) with inexact oracle  \cite{nemirovski2009robust,juditsky2012first-order,gasnikov2016gradient-free}.\footnote{By using dual averaging scheme~\cite{nesterov2009primal-dual} we can rewrite Alg.~\ref{Alg:OnlineMD} in online regime \cite{hazan2016introduction,orabona2019modern} without including $N$ in the stepsize policy. Note, that mirror descent and dual averaging scheme are very close to each other \cite{juditsky2019unifying}.}  For a prox-function $d(x)$ and the corresponding Bregman divergence $B_d(x,x^1)$, the proximal mirror descent  step is 
\begin{equation}\label{eq:prox_mirr_step}
   x^{k+1} = \arg\min_{x\in X}\left( \eta \left\langle g_\delta(x^k,\xi^k), x\right\rangle + B_d(x,x^k)\right).
\end{equation}
We consider the simplex setup: 
 prox-function $d(x) = \la x,\log x \ra$. Here and below, functions such as $\log$ or $\exp$ are always applied element-wise. The corresponding Bregman divergence is given by the Kullback--Leibler divergence
    \[
{\rm KL}(x,x^1) = \la x, \log(x/x^1)\ra - \boldsymbol{1}^\top(x-x^1).
\]
Then the starting point is taken as $x^1 = \arg\min\limits_{x\in \Delta_n}d(x)= (1/n,...,1/n)$.

\begin{theorem}\label{Th:MDgener}
Let $ R^2 \triangleq {\rm KL}(x^*,x^1) \leq \log n $ and $D =\max\limits_{x',x''\in \Delta_n}\|x'-x''\|_1 = 2$. Let  $f:X\times \Xi \rightarrow \R^n$ be $M_\infty$-Lipschitz w.r.t. $x$ in the $\ell_1$-norm. Let $\breve x^N \triangleq \frac{1}{N}\sum_{k=1}^{N}x^k $ be the average of outputs generated by iterative formula \eqref{eq:prox_mirr_step} with $\eta = \frac{\sqrt{2} R}{M_\infty\sqrt{N} }$. Then,  with probability  
at least $1-\alpha$  we have 
\begin{equation*}
F(\breve x^N) - F(x^*)   \leq\frac{M_\infty (3R+2D \sqrt{\log (\alpha^{-1})})}{\sqrt{2N}} +\delta D = O\left(\frac{M_\infty \sqrt{\log ({n}/{\alpha})}}{\sqrt{N}} +2 \delta \right). 
\end{equation*} 
\end{theorem}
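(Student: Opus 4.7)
I would prove this by combining the classical regret bound for stochastic mirror descent in the simplex setup with an Azuma--Hoeffding concentration for the martingale arising from the stochastic gradients. The first step is the standard one-step proximal inequality for the update \eqref{eq:prox_mirr_step}: since the prox-function $d(x) = \la x, \log x\ra$ is $1$-strongly convex on $\Delta_n$ with respect to the $\ell_1$-norm, the associated Kullback--Leibler divergence satisfies, for $x^* \in \Delta_n$,
\begin{equation*}
\eta \la g_\delta(x^k,\xi^k), x^k - x^*\ra \leq {\rm KL}(x^*, x^k) - {\rm KL}(x^*, x^{k+1}) + \frac{\eta^2}{2}\|g_\delta(x^k,\xi^k)\|_\infty^2.
\end{equation*}
Summing from $k=1$ to $N$, telescoping, and using ${\rm KL}(x^*, x^1) = R^2$ together with the nonnegativity of ${\rm KL}(x^*, x^{N+1})$ yields
\begin{equation*}
\sum_{k=1}^N \la g_\delta(x^k,\xi^k), x^k - x^*\ra \leq \frac{R^2}{\eta} + \frac{\eta}{2}\sum_{k=1}^N \|g_\delta(x^k,\xi^k)\|_\infty^2.
\end{equation*}

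Next, I would convert this inner-product sum into a bound on $\sum_k [F(x^k) - F(x^*)]$. Convexity gives $F(x^k) - F(x^*) \leq \la \nabla F(x^k), x^k - x^*\ra$, and I split the right-hand side as
\begin{equation*}
\la g_\delta(x^k,\xi^k), x^k - x^*\ra + \la \nabla f(x^k,\xi^k) - g_\delta(x^k,\xi^k), x^k - x^*\ra + \la \nabla F(x^k) - \nabla f(x^k,\xi^k), x^k - x^*\ra.
\end{equation*}
The second inner product is controlled by $\delta D$ per step via H\"older's inequality together with $\|\nabla f - g_\delta\|_\infty \leq \|\nabla f - g_\delta\|_2 \leq \delta$ from \eqref{eq:gen_delta} and $\|x^k - x^*\|_1 \leq D$. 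The $M_\infty$-Lipschitz hypothesis gives $\|\nabla f(x^k,\xi^k)\|_\infty \leq M_\infty$, so $\|g_\delta\|_\infty \leq M_\infty + \delta$, which controls the squared-norm term appearing in the regret bound.

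The main technical step is the third sum. It is a martingale-difference sum with respect to the filtration generated by $\xi^1,\ldots,\xi^{k-1}$ (since $\E[\nabla f(x^k,\xi^k)\mid x^k] = \nabla F(x^k)$), and each increment is bounded by $2M_\infty D$ in absolute value, because $\|\nabla F(x^k)\|_\infty,\, \|\nabla f(x^k,\xi^k)\|_\infty \leq M_\infty$ and $\|x^k - x^*\|_1 \leq D$. Azuma--Hoeffding then gives, with probability at least $1-\alpha$,
\begin{equation*}
\sum_{k=1}^N \la \nabla F(x^k) - \nabla f(x^k,\xi^k), x^k - x^*\ra \leq 2 M_\infty D \sqrt{N \log(\alpha^{-1})}.
\end{equation*}
Combining the three contributions, dividing by $N$, applying Jensen's inequality $F(\breve x^N) \leq \tfrac{1}{N}\sum_k F(x^k)$, and substituting $\eta = \sqrt{2}R/(M_\infty\sqrt{N})$ (which balances $R^2/(\eta N)$ against $\eta M_\infty^2/2$) yields the claimed bound, with the remaining $\delta$-dependence collected in the additive $\delta D$ term. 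The main obstacle is passing from an in-expectation guarantee to the stated high-probability form, which is resolved by the Azuma--Hoeffding step above; the rest is careful bookkeeping of the oracle error $\delta$ and of the constants $3R$ and $2D\sqrt{\log(\alpha^{-1})}$ appearing in the final inequality.
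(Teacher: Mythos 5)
Your proposal follows essentially the same route as the paper's proof: the one-step proximal inequality for the KL prox, the three-way decomposition of $\la \nabla F(x^k), x^k-x^*\ra$ into the $g_\delta$ term, the $\delta D$ oracle-error term, and the martingale-difference term, followed by Azuma--Hoeffding with increments bounded by $2M_\infty D$, Jensen's inequality, and the stated choice of $\eta$. The only (immaterial) difference is that you track the $\delta$-inflation of $\|g_\delta\|_\infty$ slightly more carefully than the paper does.
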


\begin{proof}
For  MD with prox-function function $d(x) = \la x\log x\ra $ the following holds for any $x\in \Delta_n$ \citep[Eq. 5.13]{juditsky2012first-order}
\begin{align*}
   \eta\la g_\delta (x^k,\xi^k), x^k -x \ra
 &\leq {\rm {\rm KL}}(x,x^k) - {\rm KL}(x,x^{k+1}) +\frac{\eta^2}{2}\|g_\delta(x^k,\xi^k)\|^2_\infty \notag\\
 &\leq {\rm {\rm KL}}(x,x^k) -{\rm {\rm KL}}(x,x^{k+1}) +\eta^2M_\infty^2.
 \end{align*}
Then by adding and subtracting the terms $\la F(x), x-x^k\ra$ and $\la \nabla  f(x, \xi^k), x-x^k\ra$  in this inequality, we get using 
Cauchy--Schwarz inequality  the following
\begin{align}\label{str_conv_W2}
   \eta\la \nabla F(x^k), x^k-x\ra   
   &\leq  \eta\la  \nabla  f(x^k,\xi^k)-g_\delta(x^k,\xi^k), x^k-x\ra \notag \\
   &+ \eta\la\nabla F(x^k)- \nabla f(x^k,\xi^k)  , x^k-x\ra + {\rm KL}(x,x^k) - {\rm KL}(x,x^{k+1}) +\eta^2M_\infty^2 \notag\\
   &\leq  \eta\delta\max_{k=1,...,N}\|x^k-x\|_1 + \eta\la\nabla F(x^k)- \nabla f(x^k,\xi^k)  , x^k-x\ra \notag \\ 
   &+{\rm KL}(x,x^k) - {\rm KL}(x,x^{k+1}) +\eta^2M_\infty^2.
\end{align}
Then using convexity of $F(x^k)$ we have
\[
 F(x^k) - F(x)\leq \eta\la \nabla F(x^k), x^k-x\ra 
\]
Then we use this for \eqref{str_conv_W2} and sum  for $k=1,...,N$ at $x=x^*$
\begin{align}\label{eq:defFxx}
     \eta\sum_{k=1}^N  F(x^k) - F(x^*) &\leq 
    \eta\delta N \max_{k=1,...,N}\|x^k-x^*\|_1
     +\eta\sum_{k=1}^N\la\nabla F(x^k)- \nabla f(x^k,\xi^k)  , x^k-x^*\ra \notag\\
    &+ {\rm KL}(x^*,x^1) - {\rm KL}(x^*,x^{N+1}) + \eta^2M_\infty^2N \notag\\ 
     &\leq  \eta\delta N{D}
     +\eta \sum_{k=1}^N\la\nabla  F(x^k)- \nabla f(x^k,\xi^k)  , x^k-x^*\ra + R^2+ \eta^2M_\infty^2N. 
\end{align}
Where we used ${\rm KL}(x^*,x^1) \leq R^2 $ and $\max\limits_{k=1,...,N}\|p^k-p^*\|_1 \leq D$.
Then using convexity of $F(x^k)$ and the definition of output $\breve x^N$ in \eqref{eq:defFxx} we have
\begin{align}\label{eq:F123xxF}
    F(\breve x^N) -F(x^*)
    &\leq  \delta D +\frac{1}{N}\sum_{k=1}^N \la\nabla F(x^k)- \nabla  f(x^k,\xi^k)  , x^k-x^*\ra + \frac{R^2}{\eta N}+ \eta M_\infty^2.
\end{align}
Next we use the {Azuma--}Hoeffding's {\cite{jud08}} inequality and get for all $\beta \geq 0$
\begin{equation}\label{eq:AzumaH}
    \mathbb{P}\left(\sum_{k=1}^{N+1}\la \nabla F(x^k) -\nabla f(x^k,\xi^k), x^k-x^*\ra \leq \beta \right)\geq 1 - \exp\left( -\frac{2\beta^2}{N(2M_\infty D)^2}\right)= 1 - \alpha.
\end{equation}
Here we used that $\la \nabla F(p^k) -\nabla f(x^k,\xi^k), x^*-x^k\ra$ is a martingale-difference and 
\begin{align*}
{\left|\la \nabla  F(x^k) -\nabla f(x^k,\xi^k), x^*-x^k\ra \right|} &\leq \| \nabla F(x^k) -\nabla W(p^k,q^k)\|_{\infty} \|x^*-x^k\|_1 \notag \\
&\leq 2M_\infty \max\limits_{k=1,...,N}\|x^k-x^*\|_1 \leq 2M_\infty D.
\end{align*}
Thus, using \eqref{eq:AzumaH} for \eqref{eq:F123xxF} we have that  with probability at least $1-\alpha$ 
\begin{equation}\label{eq:eta}
   F(\breve x^N) -  F(x^*) \leq  \delta D +\frac{\beta}{N}+ \frac{R^2}{\eta N}+ \eta M_\infty^2.
\end{equation}
Then, expressing $\beta$ through $\alpha$  and substituting $\eta = \frac{ R}{M_\infty} \sqrt{\frac{2}{N}}$ to \eqref{eq:eta}  ( such $\eta$ minimize the r.h.s. of \eqref{eq:eta}),  we get \begin{align*}
  & F(\breve x^N) -  F(x^*) \leq   \delta D + \frac{M_\infty D\sqrt{2\log(1/\alpha)} }{\sqrt{N} } + \frac{M_\infty R}{\sqrt{2N}}+ \frac{M_\infty R\sqrt{2}}{\sqrt{N}} \notag \\
  &\leq \delta D + \frac{M_\infty (3R+2D \sqrt{\log(1/\alpha) })}{\sqrt{2N}}. 
\end{align*} 
Using $R=\sqrt{\log n}$ and {$D = 2$}  in this inequality,
we obtain
\begin{align}\label{eq:final_est}
  F(\breve x^N) -  F(x^*)  &\leq \frac{M_\infty (3\sqrt{\log{n}} +4 \sqrt{\log(1/\alpha)})}{\sqrt{2N}} +2\delta. 
\end{align} 
We raise this to the second power, use that for all $a,b\geq 0, ~ 2\sqrt{ab}\leq a+b$ and then extract the square root. We obtain the following
\begin{align*}
\sqrt{\left(3\sqrt{\log{n}} +4 \sqrt{\log(1/\alpha)}\right)^2} &= \sqrt{ 9\log{n} + 16\log(1/\alpha) +24\sqrt{\log{n}}\sqrt{\log(1/\alpha)}  } \\
&\leq  \sqrt{ 18\log{n} + 32\log(1/\alpha) }.
\end{align*}
Using this for \eqref{eq:final_est}, we get the  statement of the theorem
\begin{align*}\label{eq:final_est2}
 F(\breve x^N) -  F(x^*)  &\leq \frac{M_\infty \sqrt{18\log{n} +32 \log(1/\alpha)}}{\sqrt{2N}} +2\delta  = O\left(\frac{M_\infty \sqrt{\log ({n}/{\alpha})}}{\sqrt{N}} +2\delta  \right) . 
\end{align*} 
\end{proof}

\subsection{ Penalization in the SAA Approach}
In this section, we study the SAA approach for non-strongly convex problem \eqref{eq:gener_risk_min_nonconv}. We regularize this problem by 1-strongly convex w.r.t. $x$ penalty function $r(x,x^1)$ in the $\ell_2$-norm 
 \begin{equation}\label{def:gener_reg_prob}
\min_{x\in X \subseteq \mathbb{R}^n} F_\lm(x) \triangleq \E f(x,\xi) +   \lm r(x,x^1)
\end{equation}
and we prove  that the sample sizes in the SA and the SAA approaches will be equal   up to logarithmic terms. 
The empirical counterpart of problem \eqref{def:gener_reg_prob} is
 \begin{equation}\label{eq:gen_prob_empir}
 \min_{x\in X }\hat{F}_\lm(x) \triangleq \frac{1}{m}\sum_{i=1}^m f(x,\xi_i) +   \lm r(x,x^1).
\end{equation}
Let us define $ \hat x_\lm = \arg\min\limits_{x\in X} \hat{F}_{\lm}(x)$.
The next lemma  proves the statement from \cite{shalev2009stochastic}  on boundness of the population sub-optimality in terms of the square root of empirical sub-optimality.
\begin{lemma}\label{Lm:pop_sub_opt}
Let $f(x,\xi)$ be convex and $M$-Lipschitz continuous w.r.t $\ell_2$-norm.  
Then for any $x \in X$ with probability at least $ 1-\delta$ the following holds
\[F_\lm(x) - F_\lm(x^*_\lm) \leq \sqrt{\frac{2M_\lm^2}{\lm} \left(\hat F_\lm(x) - \hat F_\lm(\hat x_\lm)\right)} + \frac{4M_\lm^2}{\alpha \lm m},\]
 where $ x^*_\lm = \arg\min\limits_{x\in X} {F}_\lm(x)$,  $M_\lm \triangleq M +\lambda \mathcal {R}^2$ and $\mathcal{R}^2 =   r(x^*,x^1)$.
\end{lemma}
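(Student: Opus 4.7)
The plan is to mirror the proof of Theorem \ref{Th:contractSAA} verbatim, applying it to the regularized objective $F_\lm$ rather than to a natively strongly convex $F$. Two ingredients make this legal: first, since $f(\cdot,\xi)$ is convex and $r(\cdot, x^1)$ is $1$-strongly convex in the $\ell_2$-norm, the shifted stochastic objective $f(x,\xi)+\lm r(x,x^1)$ is $\lm$-strongly convex in $x$, and so is its sample average $\hat F_\lm$; second, one can treat the sum as $M_\lm$-Lipschitz on the relevant region with $M_\lm = M + \lm \mathcal R^2$, the $M$ piece coming from $f$ and the $\lm$-scaled piece coming from the Bregman term controlled via the radius $\mathcal R^2 = r(x^*,x^1)$.

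With these two facts in hand, the first step is to apply Theorem \ref{Th:shalev2009stochastic} to the empirical regularized problem $\min_{x\in X}\hat F_\lm(x)$ (with $\gamma \leftarrow \lm$ and $M\leftarrow M_\lm$). This yields, with probability at least $1-\alpha$,
\[
F_\lm(\hat x_\lm)-F_\lm(x^*_\lm)\leq \frac{4 M_\lm^2}{\alpha\, \lm\, m}.
\]
The second step is the deterministic decomposition, for arbitrary $x\in X$,
\[
F_\lm(x)-F_\lm(x^*_\lm)=\bigl[F_\lm(x)-F_\lm(\hat x_\lm)\bigr]+\bigl[F_\lm(\hat x_\lm)-F_\lm(x^*_\lm)\bigr],
\]
exactly as in \eqref{eq:exp_der}--\eqref{eq:eqtosub}. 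For the first bracket, I would reuse the Jensen-plus-Lipschitz argument from the proof of Theorem \ref{Th:contractSAA}: pointwise Lipschitz continuity of $f(\cdot,\xi)+\lm r(\cdot,x^1)$ with constant $M_\lm$ implies, after taking expectations and using Jensen,
\[
|F_\lm(x)-F_\lm(\hat x_\lm)|\leq M_\lm\|x-\hat x_\lm\|_2.
\]
For the second step within the first bracket, I would invoke $\lm$-strong convexity of $\hat F_\lm$ together with the fact that $\hat x_\lm$ is its minimizer (so the gradient term vanishes in the strong convexity inequality), giving
\[
\|x-\hat x_\lm\|_2\leq \sqrt{\tfrac{2}{\lm}\bigl(\hat F_\lm(x)-\hat F_\lm(\hat x_\lm)\bigr)}.
\]
Combining these three displayed inequalities yields the stated bound.

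The main obstacle is the Lipschitz constant: $r(\cdot,x^1)$ is not globally Lipschitz in general, only $1$-strongly convex, so one must argue that on the subset of $X$ traced out in the argument $\|\nabla_x r(x,x^1)\|_2$ is controlled in terms of the radius encoded by $\mathcal R^2 = r(x^*,x^1)$; this is what produces the extra additive $\lm\mathcal R^2$ in $M_\lm$. Beyond that technicality, everything else is a line-by-line repeat of the proof of Theorem \ref{Th:contractSAA} with $(F,\hat F,\gamma,M)$ replaced by $(F_\lm,\hat F_\lm,\lm,M_\lm)$.
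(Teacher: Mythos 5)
Your proposal matches the paper's proof essentially line for line: the same decomposition $F_\lm(x)-F_\lm(x^*_\lm)=[F_\lm(x)-F_\lm(\hat x_\lm)]+[F_\lm(\hat x_\lm)-F_\lm(x^*_\lm)]$, the same application of Theorem~\ref{Th:shalev2009stochastic} with $(\gamma,M)\leftarrow(\lm,M_\lm)$ to the second bracket, and the same Jensen--Lipschitz plus $\lm$-strong-convexity chain for the first. The Lipschitz-constant technicality you flag ($r(\cdot,x^1)$ not being globally Lipschitz, so that $M_\lm = M+\lm\mathcal R^2$ needs a localization argument) is real, but the paper handles it no more carefully than you do, so there is nothing further to reconcile.
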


\begin{proof}
Let us define $f_\lm(x, \xi) \triangleq f (x, \xi) + \lm r(x,x^1) $. As $f(x, \xi)$ is $M$-Lipschitz continuous, $f_\lm(x, \xi)$ is also Lipschitz continuous with $M_\lm \triangleq M +\lambda \mathcal {R}^2$.
From  
 Jensen's inequality for the expectation, and the module as a  convex function,  we get that 
 $F_\lm(x)$  is also  $M_\lm$-Lipschitz continuous
\begin{equation}\label{eq:MLipscht_cont}
|F_\lm(x)-  F_\lm(\hat x_\lm) |  \leq  M_\lm\| x -\hat x_\lm\|_2, \qquad \forall x \in X.
\end{equation}
From   $\lm$-strong convexity of $f(x, \xi)$, we obtain that  $\hat F_\lm(x)$ is also $\lm$-strongly convex 
\[
\|x-\hat x_\lm\|_2^2\leq \frac{2}{\lm}\left( \hat F_\lm(x)-\hat F_\lm(\hat x_\lm)  \right), \qquad \forall x \in X. 
\]
From this and \eqref{eq:MLipscht_cont}  it follows
\begin{equation}\label{eq:sup_opt_empr}
  F_\lm(x)-  F_\lm(\hat x_\lm)  \leq \sqrt{\frac{2M_\lm^2}{\lm}\left( \hat F_\lm(x)-\hat F_\lm(\hat x_\lm) \right)}.  
\end{equation}
For any $x \in X$ and $ x^*_\lm = \arg\min\limits_{x\in X} {F}_\lm(x)$ we consider
\begin{equation}\label{eq:Fxhatx}
    F_\lm(x)-  F_\lm( x^*_\lm) = F_\lm(x)- F_\lm(\hat x_\lm) + F_\lm(\hat x_\lm) - F_\lm( x^*_\lm).
\end{equation}
From \citep[Theorem 6]{shalev2009stochastic} we have with probability at least $ 1 -\alpha$
\[ F_\lm(\hat x_\lm) - F_\lm(x^*_\lm) \leq \frac{4M_\lm^2}{\alpha \lm m}.\]
Using  this and \eqref{eq:sup_opt_empr} for \eqref{eq:Fxhatx} we obtain with probability at least $ 1 -\alpha$
\[F_\lm(x) - F_\lm(x^*_\lm) \leq \sqrt{\frac{2M_\lm^2}{\lm}\left( \hat F_\lm(x)-\hat F_\lm(\hat x_\lm) \right)} + \frac{4M_\lm^2}{\alpha \lm m}.\]
\end{proof}
The next theorem proves the eliminating the linear dependence on $n$ in the sample size of the regularized SAA approach for a non-strongly convex objective 
(see estimate \eqref{eq:SNSm}), and estimates the  auxiliary precision for the regularized SAA problem \eqref{eq:aux_e_quad}.
\begin{theorem}\label{th_reg_ERM} 
Let $f(x,\xi)$ be convex and $M$-Lipschitz continuous w.r.t $x$ and
let $\hat x_{\e'}$ be such that
\[
\frac{1}{m}\sum_{i=1}^m f(\hat x_{\e'},\xi_i) +   \lm r(\hat x_{\e'}, x^1) - \arg\min_{x\in X}  \left\{\frac{1}{m}\sum_{i=1}^m f(x,\xi_i) +   \lm r(x,x^1)\right\} \leq \e'. 
\]
To satisfy
\[F(\hat x_{\e'}) -  F(x^*)\leq \e\]
with probability  at least $1-\alpha$
, we need to take  $\lm = \e/(2\mathcal{R}^2)$,
 \[m = \frac{ 32 M^2\mathcal{R}^2}{\alpha \e^2}, \]
 where
$\mathcal{R}^2 =  r(x^*,x^1)$. The precision $\e'$ is defined as
\[\e' = \frac{\e^3}{64M^2 \mathcal{R}^2}.\]
\end{theorem}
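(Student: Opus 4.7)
The plan is to reduce the unregularized problem to the regularized problem \eqref{def:gener_reg_prob} and then invoke Lemma \ref{Lm:pop_sub_opt}. First I would use that the penalty $r(\cdot,x^1)$ is nonnegative with $r(x^1,x^1)=0$, which yields the one-sided sandwich
\[
F(\hat x_{\e'}) \le F_\lm(\hat x_{\e'}), \qquad F_\lm(x^*_\lm)=\min_{x\in X}\{F(x)+\lm r(x,x^1)\}\le F(x^*)+\lm \mathcal{R}^2,
\]
where $\mathcal{R}^2=r(x^*,x^1)$. Writing
\[
F(\hat x_{\e'})-F(x^*)=\bigl(F(\hat x_{\e'})-F_\lm(\hat x_{\e'})\bigr)+\bigl(F_\lm(\hat x_{\e'})-F_\lm(x^*_\lm)\bigr)+\bigl(F_\lm(x^*_\lm)-F(x^*)\bigr),
\]
the first bracket is nonpositive and the third is at most $\lm \mathcal{R}^2$, so
\[
F(\hat x_{\e'})-F(x^*)\le \bigl(F_\lm(\hat x_{\e'})-F_\lm(x^*_\lm)\bigr)+\lm \mathcal{R}^2.
\]

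Next I would apply Lemma \ref{Lm:pop_sub_opt} with $x=\hat x_{\e'}$. Since $f(\cdot,\xi)+\lm r(\cdot,x^1)$ is $\lm$-strongly convex and $M_\lm$-Lipschitz, and by hypothesis $\hat F_\lm(\hat x_{\e'})-\hat F_\lm(\hat x_\lm)\le \e'$, the lemma produces, with probability at least $1-\alpha$,
\[
F_\lm(\hat x_{\e'})-F_\lm(x^*_\lm)\le \sqrt{\frac{2 M_\lm^2 \e'}{\lm}}+\frac{4M_\lm^2}{\alpha \lm m}.
\]
Combining the two displays gives the working inequality
\[
F(\hat x_{\e'})-F(x^*)\le \lm \mathcal{R}^2+\sqrt{\frac{2 M_\lm^2 \e'}{\lm}}+\frac{4M_\lm^2}{\alpha \lm m},
\]
which is the object to be minimized in $\lm,\e',m$.

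Finally I would balance the three terms so that each contributes at most $\e/2$, $\e/4$, $\e/4$ respectively. Choosing $\lm=\e/(2\mathcal{R}^2)$ makes the bias term equal to $\e/2$; enforcing the square-root term $\le \e/4$ dictates $\e' \le \lm \e^2/(32 M_\lm^2)=\e^3/(64 M_\lm^2 \mathcal{R}^2)$; and enforcing the variance term $\le \e/4$ dictates $m\ge 16 M_\lm^2/(\alpha \lm \e)=32 M_\lm^2 \mathcal{R}^2/(\alpha \e^2)$. Adding these three contributions reproduces the claim $F(\hat x_{\e'})-F(x^*)\le \e$ on the same high-probability event.

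The one mild obstacle is the dependence of $M_\lm=M+\lm\mathcal{R}^2$ on $\lm$. In the only regime of interest ($\e$ small compared to $M\mathcal{R}^2$, as otherwise the estimate is trivial) the choice $\lm=\e/(2\mathcal{R}^2)$ gives $M_\lm=M+\e/2\le 2M$, so replacing $M_\lm$ by $M$ in the two requirements above only changes the numerical constants and delivers exactly the stated formulas $m=32 M^2 \mathcal{R}^2/(\alpha \e^2)$ and $\e'=\e^3/(64 M^2 \mathcal{R}^2)$.
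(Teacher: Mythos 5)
Your proposal is correct and follows essentially the same route as the paper: decompose the excess risk through the regularized objective, bound the bias term by $\lm\mathcal{R}^2$ using $F_\lm(x^*_\lm)\le F(x^*)+\lm r(x^*,x^1)$ and the nonnegativity of the penalty, invoke Lemma~\ref{Lm:pop_sub_opt} for the regularized sub-optimality, and balance the three terms with $\lm=\e/(2\mathcal{R}^2)$. Your explicit remark that $M_\lm\le 2M$ in the relevant regime is a slightly more careful version of the paper's assumption $M\gg\lm\mathcal{R}^2$, but it does not change the argument.
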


\begin{proof}
From Lemma \ref{Lm:pop_sub_opt}
we get for $x=\hat x_{\e'}$
\begin{align}\label{eq:suboptimality}
F_\lm(\hat x_{\e'}) - F_\lm(x^*_\lm) &\leq \sqrt{\frac{2M_\lm^2}{\lm}\left( \hat F_\lm(\hat x_{\e'})-\hat F_\lm(\hat x_\lm ) \right)} + \frac{4M_\lm^2}{\alpha \lm m} \notag \\
&=\sqrt{\frac{2M_\lm^2}{\lm}\e'} + \frac{4M_\lm^2}{\alpha \lm m},
\end{align}
 where we used the definition of $\hat x_{\e'}$ from the statement of the this theorem.
Then we  subtract $F(x^*)$ in both sides of \eqref{eq:suboptimality} and get 
\begin{align}\label{eq:suboptimalityIm}
  F_\lm( \hat x_{\e'}) - F(x^*)  
  &\leq \sqrt{\frac{2M_\lm^2\e'}{\lambda}} +\frac{4M_\lm^2}{\alpha\lambda m} +F_\lm(x^*_\lm)-F(x^*). 
\end{align}
Then we use
\begin{align*}
    F_\lm(x^*_\lm) &\triangleq \min_{x\in X}\left\{ F(x)+\lm r(x,x^1) \right\} && \notag\\
    &\leq F(x^*) + \lm r(x^*,x^1) &&  \text{The inequality holds for any $x \in X$}, \notag\\
    &=F(x^*) +\lm \mathcal{R}^2
\end{align*}
where $\mathcal{R} = r(x^*,x^1)$.
Then from this and \eqref{eq:suboptimalityIm} and the definition of $F_\lm(\hat x_{\e'})$ in \eqref{def:gener_reg_prob}  we get 
\begin{align}\label{eq:minim+lamb}
  F( \hat x_{\e'}) -  F(x^*) &\leq \sqrt{\frac{2M_\lm^2}{\lm}\e'} + \frac{4M_\lm^2}{\alpha \lm m} - \lambda r(\hat x_{\e'}, x^1)+{\lambda}\mathcal{R}^2\notag \\
  &\leq \sqrt{\frac{2M_\lm^2\e'}{\lambda}} +\frac{4M_\lm^2}{\alpha\lambda m} +{\lambda}\mathcal{R}^2. 
\end{align}
Assuming $M \gg \lambda \mathcal{R}^2 $ and choosing $\lm =\e/ (2\mathcal{R}^2)$ in \eqref{eq:minim+lamb}, we get the
following
\begin{equation}\label{offline23}
  F( \hat x_{\e'}) - F(x^*) =   \sqrt{\frac{4M^2\mathcal R^2\e'}{\e}} +\frac{8M^2\mathcal R^2}{\alpha m \e} +\e/2.
\end{equation}
Equating the first term and the second term in the r.h.s. of \eqref{offline23} to $\e/4$ we obtain the 
 the rest  statements of the theorem including  $  F( \hat x_{\e'}) - F(x^*) \leq \e.$ 

\end{proof}

\section{Fr\'{e}chet Mean  with respect to Entropy-Regularized Optimal Transport }\label{sec:pen_bar}

In this section, we consider the problem of finding population barycenter of independent identically distributed random discrete  measures. We define the population barycenter of distribution $\PP$ with respect  to
 entropy-regularized  transport distances 
 \begin{equation}\label{def:populationWBFrech}
\min_{p\in  \Delta_n}
W_\gamma(p)\triangleq
\E_q W_\gamma(p,q), \qquad q \sim \PP.
\end{equation}

\subsection{Properties of  Entropy-Regularized Optimal Transport}
Entropic regularization of transport distances \cite{cuturi2013sinkhorn}  improves their statistical properties  \cite{klatt2020empirical,bigot2019central} and reduces their computational complexity. Entropic regularization has shown good results in generative models \cite{genevay2017learning}, 
multi-label learning \cite{frogner2015learning}, dictionary learning \cite{rolet2016fast}, image processing  \cite{cuturi2016smoothed,rabin2015convex}, neural imaging \cite{gramfort2015fast}. 

Let us firstly remind   optimal transport problem   between histograms $p,q \in \Delta_n$  with cost matrix $C\in \R_{+}^{n\times n}$
\begin{equation}\label{eq:OTproblem}
 W(p,q) \triangleq   \min_{\pi \in U(p,q)} \la C, \pi \ra,
\end{equation}
where
\[U(p,q) \triangleq\{ \pi\in \R^{n\times n}_+: \pi \one =p, \pi^T \one = q\}.\]

\begin{remark}[Connection with the $\rho$-Wasserstein distance]
When for $\rho\geq 1$, $C_{ij} =\mathtt d(x_i, x_j)^\rho$  in \eqref{eq:OTproblem}, where $\mathtt d(x_i, x_j)$ is a distance on support points $x_i, x_j$ of space $X$, then $W(p,q)^{1/\rho}$ is known as the $\rho$-Wasserstein distance on $\Delta_n$.
\end{remark}
Nevertheless, all the results of this thesis  are based only on the assumptions that the  matrix $C \in \R_+^{n\times n}$ is symmetric and non-negative. Thus, optimal transport  problem defined in \eqref{eq:OTproblem} is    a more general  than the  Wasserstein distances.

Following \cite{cuturi2013sinkhorn}, we introduce  entropy-regularized optimal transport problem  
\begin{align}\label{eq:wass_distance_regul}
W_\gamma (p,q) &\triangleq \min_{\pi \in  U(p,q)} \left\lbrace \left\langle  C,\pi\right\rangle - \gamma E(\pi)\right\rbrace,
\end{align}
 where $\gamma>0$ and $E(\pi) \triangleq -\la \pi,\log \pi \ra $ is the  entropy. Since $E(\pi)$ is 1-strongly concave on $\Delta_n$ in the  $\ell_1$-norm, the objective in \eqref{eq:wass_distance_regul} is $\gamma$-strongly convex  with respect to $\pi$ in the $\ell_1$-norm on $\Delta_n$, and hence problem \eqref{eq:wass_distance_regul} has a unique optimal solution. Moreover, $W_\gamma (p,q)$ is $\gamma$-strongly convex with respect to $p$ in the $\ell_2$-norm on $\Delta_n$ \citep[Theorem 3.4]{bigot2019data}.

One particular advantage of the entropy-regularized  optimal transport is a closed-form representation for its dual function~\cite{agueh2011barycenters,cuturi2016smoothed} defined by the Fenchel--Legendre transform of $W_\gamma(p,q)$ as a function of $p$
\begin{align}\label{eq:FenchLegdef}
     W_{\gamma, q}^*(u) &=  \max_{ p \in \Delta_n}\left\{ \la u, p \ra - W_{\gamma}(p, q) \right\} =  \gamma\left(E(q) + \left\langle q, \log (K \beta) \right\rangle  \right)\notag\\
    &= \gamma\left(-\la q,\log q\ra + \sum_{j=1}^n [q]_j \log\left( \sum_{i=1}^n \exp\left(([u]_i - C_{ji})/\gamma\right) \right)\right)
\end{align}
  where  $\beta = \exp( {u}/{\gamma}) $, \mbox{$K = \exp( {-C}/{\gamma }) $} and $[q]_j$ is $j$-th component of  vector $q$. Functions such as $\log$ or $\exp$ are always applied element-wise for vectors.
Hence, the gradient of dual function $W_{\gamma, q}^*(u)$ is also represented in  a closed-form \cite{cuturi2016smoothed}
\begin{equation*}
\nabla W^*_{\gamma,q} (u)
= \beta \odot \left(K \cdot {q}/({K \beta}) \right) \in \Delta_n,
\end{equation*}
where symbols $\odot$ and $/$ stand for the element-wise product and element-wise division respectively.
This can be also written as
\begin{align}\label{eq:cuturi_primal}
\forall l =1,...,n \qquad [\nabla W^*_{\gamma,q} (u)]_l = \sum_{j=1}^n [q]_j \frac{\exp\left(([u]_l-C_{lj})/\gamma\right)  }{\sum_{i=1}^n\exp\left(([u]_i-C_{ji})/\gamma\right)}.
\end{align}
The dual representation of $W_\gamma (p,q) $    is
\begin{align}\label{eq:dual_Was}
  W_{\gamma}(p,q) &= \min_{\pi \in U(p,q) }\sum_{i,j=1}^n\left(  C_{ij}\pi_{i,j}  + \gamma \pi_{i,j}\log \pi_{i,j} \right)  \notag \\
  &=\max_{u, \nu \in \R^n} \left\{  \la u,p\ra + \la\nu,q\ra  - \gamma\sum_{i,j=1}^n\exp\left( ([u]_i+[\nu]_j -C_{ij})/\gamma -1  \right) \right\} \\
  &=\max_{u \in \R^n}\left\{ \la u,p\ra -
  \gamma\sum_{j=1}^n [q]_j\log\left(\frac{1}{[q]_j}\sum_{i=1}^n\exp\left( ([u]_i -C_{ij})/\gamma \right)  \right)
  \right\}. \notag 
\end{align}
Any solution $\begin{pmatrix}
 u^*\\
 \nu^*
 \end{pmatrix}$ of  \eqref{eq:dual_Was}  is a subgradient 
 of $  W_{\gamma}(p,q)$  \citep[Proposition 4.6]{peyre2019computational}  \begin{equation}\label{eq:nabla_wass_lagrang}
 \nabla W_\gamma(p,q) = \begin{pmatrix}
 u^*\\
 \nu^*
 \end{pmatrix}.
\end{equation} We consider  $u^*$ and $\nu^*$ such that
  $\la u^*, \one\ra = 0$ and $\la \nu^*, \one\ra = 0$  ($u^*$ and $\nu^*$ are determined up to an additive constant).

The next theorem \cite{bigot2019data} describes  the Lipschitz continuity of $W_\gamma (p,q)$ in $p$ on probability simplex $\Delta_n$  restricted to
\[\Delta^\rho_n = \left\{p\in \Delta_n : \min_{i \in [n]}p_i \geq \rho \right\},\]
where $0<\rho<1$ is an arbitrary small constant. 
\begin{theorem}\citep[Theorem 3.4, Lemma 3.5]{bigot2019data}
\label{Prop:wass_prop}
\begin{itemize}
\item For any $q \in \Delta_n$, 
$W_\gamma (p,q)$ is $\gamma$-strongly convex w.r.t. $p$ in the $\ell_2$-norm
\item For any $q \in \Delta_n$, $p \in \Delta^\rho_n$ and $0<\rho<1$, 
$\|\nabla_p W_\gamma (p,q)\|_2 \leq M$, where 
\[M = \sqrt{\sum_{j=1}^n\left(   2\gamma\log n  +\inf_{i\in [n]}\sup_{l \in [n]} |C_{jl} -C_{il}|  -\gamma\log \rho \right)^2}. \]
\end{itemize}
\end{theorem}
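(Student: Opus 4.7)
The proposition splits naturally into a strong convexity claim and a Lipschitz gradient bound, and I would prove them separately.

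For the strong convexity, the plan is to invoke the duality between strong convexity and smoothness: it suffices to show that the Fenchel conjugate $W^*_{\gamma,q}$ is $(1/\gamma)$-smooth in the $\ell_2$-norm. Since the gradient of $W^*_{\gamma,q}$ is available in closed form from \eqref{eq:cuturi_primal}, I would differentiate once more to get
\[
\nabla^2 W^*_{\gamma,q}(u) \;=\; \frac{1}{\gamma}\sum_{j=1}^n [q]_j\bigl(\operatorname{diag}(s_j(u)) - s_j(u)s_j(u)^{T}\bigr),
\]
where $s_j(u)\in\Delta_n$ is the $j$-th softmax vector appearing in \eqref{eq:cuturi_primal}. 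For any $s\in\Delta_n$ and any vector $v$, $v^{T}(\operatorname{diag}(s)-ss^{T})v=\operatorname{Var}_s(v)\leq\sum_l s_l v_l^2\leq\|v\|_2^2$, so each summand is $\preceq I$ and hence $\nabla^2 W^*_{\gamma,q}(u)\preceq\tfrac{1}{\gamma}I$. The standard Fenchel correspondence then gives $\gamma$-strong convexity of $W_\gamma(\cdot,q)$ in the $\ell_2$-norm.

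For the gradient bound, I would start from \eqref{eq:nabla_wass_lagrang}, which identifies $\nabla_p W_\gamma(p,q)=u^*$ under the normalization $\la u^*,\one\ra=0$. From the closed form of the optimal plan $\pi^*_{ij}=\exp(([u^*]_i+[\nu^*]_j-C_{ij})/\gamma-1)$ and the marginal $\sum_j\pi^*_{ij}=[p]_i$, I would solve for
\[
[u^*]_i \;=\; \gamma\log[p]_i+\gamma-\gamma\log Z^\nu_i,\qquad Z^\nu_i\triangleq\sum_{j=1}^n\exp\bigl(([\nu^*]_j-C_{ij})/\gamma\bigr).
\]
Averaging over $i$ and using $\la u^*,\one\ra=0$ eliminates the additive $\gamma$ and produces
\[
[u^*]_i \;=\; \gamma\Bigl(\log[p]_i-\tfrac{1}{n}\textstyle\sum_k\log[p]_k\Bigr) - \gamma\Bigl(\log Z^\nu_i-\tfrac{1}{n}\textstyle\sum_k\log Z^\nu_k\Bigr).
\]
The first bracket is bounded in absolute value by $-\gamma\log\rho$ since $[p]_k\in[\rho,1]$. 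For the second, I would use that $Z^\nu_i/Z^\nu_k$ is a weighted average of $\exp((C_{kj}-C_{ij})/\gamma)$ over $j$, hence $\gamma|\log(Z^\nu_i/Z^\nu_k)|\leq\sup_l|C_{il}-C_{kl}|$, and combine this with a $\log n$ tightness estimate from log-sum-exp applied at the index $i$ that minimizes $\sup_l|C_{jl}-C_{il}|$. Squaring and summing over $j\in[n]$ then yields the stated $\ell_2$ bound $M$.

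The main obstacle is the second step: reconciling the global normalization $\la u^*,\one\ra=0$ with the pointwise log-sum-exp estimates so that the cost-dependence appears in the sharper $\inf_i\sup_l|C_{jl}-C_{il}|$ form rather than through an average over $i$. The trick will be to use the freedom in the choice of the reference index inside the log-partition estimate before invoking the zero-mean normalization, with the $\log n$ factor coming from the log-sum-exp slack $0\leq\log Z^\nu_i-\max_j(\nu^*_j-C_{ij})/\gamma\leq\log n$.
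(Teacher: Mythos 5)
The paper itself contains no proof of this statement: it is quoted from \citep[Theorem 3.4, Lemma 3.5]{bigot2019data}, so there is no in-paper argument to compare yours against, and I can only judge the proposal on its own merits. Your first bullet is correct and is the standard route (also the one used in the cited source): $\nabla^2 W^*_{\gamma,q}(u)=\frac{1}{\gamma}\sum_{j}[q]_j\bigl(\operatorname{diag}(s_j(u))-s_j(u)s_j(u)^{T}\bigr)\preceq\frac{1}{\gamma}I$, and since $W_\gamma(\cdot,q)+\iota_{\Delta_n}$ is proper, closed and convex, the conjugacy between $(1/\gamma)$-smoothness of $f^*$ and $\gamma$-strong convexity of $f$ in the $\ell_2$-norm applies.

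The second bullet has a genuine gap, which you half-concede. The zero-mean normalization forces you to bound $\gamma\bigl(\log Z^\nu_j-\frac{1}{n}\sum_k\log Z^\nu_k\bigr)=\frac{\gamma}{n}\sum_k\log(Z^\nu_j/Z^\nu_k)$, and your pairwise estimate $\gamma|\log(Z^\nu_j/Z^\nu_k)|\le\sup_l|C_{jl}-C_{kl}|$ (which is correct) therefore delivers the \emph{average} $\frac1n\sum_k\sup_l|C_{jl}-C_{kl}|$ over all reference indices. ``Freedom in the choice of the reference index'' cannot turn this into an infimum over $i$: once $\la u^*,\one\ra=0$ is imposed, $[u^*]_j$ is a fixed number, the sum necessarily runs over every $k$, and the discrepancy between the average and the infimum is a cost-dependent quantity that is not absorbed by the $\gamma\log n$ log-sum-exp slack; writing $[u^*]_j=([u^*]_j-[u^*]_i)+[u^*]_i$ for a favorable $i$ only shifts the problem to bounding $[u^*]_i$. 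Separately, note that the target constant as transcribed is degenerate: taking $i=j$ gives $\inf_{i\in[n]}\sup_{l\in[n]}|C_{jl}-C_{il}|=0$, so the stated $M$ would not depend on $C$ at all and would vanish as $\gamma\to0$, which is impossible since the entropic dual potentials converge to (generically nonzero) LP dual potentials; the paper's own subsequent simplification $M=O(\sqrt n\|C\|_\infty)$ tacitly treats that term as nonzero, so the constant must be read in its original form from the source. What your argument does establish is the weaker bound $|[u^*]_j|\le-\gamma\log\rho+\frac1n\sum_k\sup_l|C_{jl}-C_{kl}|$, which after squaring and summing still yields $M=O(\sqrt n\,\|C\|_\infty)$, i.e., everything the paper actually uses downstream — but it is not a proof of the constant as stated.
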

We roughly take $M = O(\sqrt n\|C\|_\infty)$ since for all $i,j\in [n], C_{ij} > 0$, we get
\begin{align*}
M &\stackrel{\text{\cite{bigot2019data}}}{=}  O\left(\sqrt{\sum_{j=1}^n\left(  \inf_{i\in [n]}\sup_{l \in [n]} |C_{jl} -C_{il}|   \right)^2} \right) \\
&= O \left(\sqrt{\sum_{j=1}^n \sup_{l \in [n]} C_{jl}^2} \right)= O\left( \sqrt{n} \sup_{j,l \in [n]} C_{jl} \right)
= O\left( \sqrt{n}\sup_{j\in [n]}\sum_{l \in [n]}C_{jl} \right)= O \left( \sqrt{n}\|C\|_\infty\right).
\end{align*}
Thus, we suppose that $W_\gamma(p,q)$ and $W(p,q)$ are Lipschitz continuous with almost the same Lipschitz constant $M$ in the $\ell_2$-norm on $\Delta_n^\rho$. 
Moreover, 
by the same arguments,  
for the Lipschitz continuity in the $\ell_1$-norm: $\|\nabla_p W_\gamma (p,q)\|_\infty \leq M_\infty$,  we can roughly estimate $M_\infty = O(\|C\|_\infty)$ by taking maximum instead of the square root of the sum.

In what follows, we use Lipshitz continuity of $W_\gamma(p,q)$ and $W(p,q)$  for measures  from $\Delta_n$ keeping in mind that   adding some noise and normalizing the measures makes them belong to $\Delta_n^\rho$. We also notice that if the measures are from  the interior of $\Delta_n$ then their barycenter will be also from  the interior of $\Delta_n$. 






\subsection{The SA Approach: Stochastic Gradient Descent}



For  problem \eqref{def:populationWBFrech}, as a particular case of problem \eqref{eq:gener_risk_min}, stochastic gradient descent method can be used. From Eq. \eqref{eq:nabla_wass_lagrang}, it follows that an approximation for the gradient of $W_\gamma(p,q)$ with respect to $p$ can be calculated by Sinkhorn algorithm \cite{altschuler2017near-linear,peyre2019computational,dvurechensky2018computational}
through the computing dual variable $u$ with $\delta$-precision
 \begin{equation}\label{inexact}
 \|\nabla_p W_\gamma(p,q) - \nabla_p^\delta W_\gamma(p,q) \|_2\leq \delta, \quad \forall q\in \Delta_n. 
\end{equation}
Here denotation  $\nabla_p^\delta W_\gamma(p,q)$ means an inexact stochastic subgradient of $ W_\gamma(p,q)$ with respect to $p$.
Algorithm \ref{Alg:OnlineGD} combines stochastic gradient descent
given by iterative formula \eqref{SA:implement_simple}   for $\eta_k = \frac{1}{\gamma k}$ with Sinkhorn algorithm (Algorithm \ref{Alg:SinkhWas})
and Algorithm \ref{Alg:EuProj} making the projection onto the simplex $\Delta_n$.


\begin{algorithm}[ht!]
\caption{Sinkhorn's algorithm \cite{peyre2019computational} for calculating $ \nabla_p^\delta W_\gamma(p^k,q^k) $}
\label{Alg:SinkhWas}  
\begin{algorithmic}[1]
\Procedure{Sinkhorn}{$p,q, C, \gamma$}
\State $a^1 \gets  (1/n,...,1/n)$,  $ b^1 \gets  (1/n,...,1/n)$  
\State $K \gets \exp (-C/\gamma)$
\While{\rm not converged}
 \State $a \gets {p}/(Kb)$
\State $ b \gets {q }/(K^\top a)$
\EndWhile
 \State \textbf{return} $  \gamma \log(a)$\Comment{Sinkhorn scaling $  a = e^{u/\gamma}$} 
  \EndProcedure
\end{algorithmic}
\end{algorithm}

\begin{algorithm}[ht!]
\caption{Euclidean Projection $\Pi_{\Delta_n}(p) = \arg\min\limits_{v\in \Delta_n}\|p-v\|_2$ onto  Simplex   $\Delta_n$ \cite{duchi2008efficient}}
\label{Alg:EuProj}  
\begin{algorithmic}[1]
\Procedure{Projection}{$w\in \R^n$}
\State Sort components of $w$ in decreasing manner: $r_1\geq r_2 \geq ... \geq r_n$.
 \State Find  $\rho = \max\left\{ j \in [n]: r_j - \frac{1}{j}\left(\sum^{j}_{i=1}r_i - 1\right) \right\}$
\State Define
    $\theta = \frac{1}{\rho}(\sum^{\rho}_{i=1}r_i - 1)$
    \State For all $i \in [n]$, define $p_i = \max\{w_i - \theta, 0\}$.
 \State \textbf{return} $p \in \Delta_n$ 
  \EndProcedure
\end{algorithmic}
\end{algorithm}

\begin{algorithm}[ht!]
\caption{Projected  Online Stochastic Gradient Descent for WB (PSGDWB)} 
\label{Alg:OnlineGD}   
\begin{algorithmic}[1]
  \Require starting point $p^1 \in \Delta_n$, realization $q^1$,  $\delta$, $\gamma$.
                \For{$k= 1,2,3,\dots$}
                \State  $\eta_{k} = \frac{1}{\gamma k}$
                \State  $\nabla_p^\delta W_\gamma(p^k,q^k)  \gets$ \textsc{Sinkhorn}$(p^k,q^k, C, \gamma)$ or the accelerated Sinkhorn \cite{guminov2019accelerated}
                \State $p^{(k+1)/2} \gets p^k - \eta_{k} \nabla_p^\delta W_\gamma(p^k,q^k)$
                \State $p^{k+1} \gets$ \textsc{Projection}$(p^{(k+1)/2})$
                  \State Sample $q^{k+1}$ 
                  \EndFor
    \Ensure $p^1,p^2, p^3...$
\end{algorithmic}
 \end{algorithm}


 For Algorithm \ref{Alg:OnlineGD} and problem \eqref{def:populationWBFrech},  Theorem \ref{Th:contract_gener} can be specified as follows 
\begin{theorem}\label{Th:contract}
Let $\tilde p^N \triangleq \frac{1}{N}\sum_{k=1}^{N}p^k $ be the average of  $N$ online outputs of Algorithm  \ref{Alg:OnlineGD} run with $\delta$. Then,  with probability  
at least $1-\alpha$  the following holds
\begin{equation*}
W_{\gamma}(\tilde p^N) - W_{\gamma}(p^*_{\gamma})
 = O\left(\frac{M^2\log(N/\alpha)}{\gamma N} + \delta  \right),  
\end{equation*}
where $p^*_{\gamma} \triangleq \arg\min\limits_{p\in  \Delta_n}
W_\gamma(p)$.

Let  Algorithm \ref{Alg:OnlineGD} run with  $\delta = O\left(\e\right)$ and  $
N =\widetilde O \left( \frac{M^2}{\gamma \e} \right) = \widetilde O \left( \frac{n\|C\|_\infty^2}{\gamma \e} \right)
$. Then,  with probability  
at least $1-\alpha$  
\begin{equation*}
 W_{\gamma}(\tilde p^N) -W_{\gamma}(p^*_{\gamma}) \leq \e \quad \text{and} \quad \|\tilde p^N - p^*_{\gamma}\|_2 \leq \sqrt{2\e/\gamma}.
\end{equation*}
The total complexity of 
 Algorithm  \ref{Alg:OnlineGD} 
is
\begin{align*}
   \widetilde O\left(  \frac{n^3\|C\|_\infty^2}{\gamma\e}\min\left\{ \exp\left( \frac{\|C\|_{\infty}}{\gamma} \right) \left( \frac{\|C\|_{\infty}}{\gamma} + \log\left(\frac{\|C\|_{\infty}}{\kappa \e^2} \right) \right),  \sqrt{\frac{n \|C\|^2_{\infty}}{ \kappa\gamma \e^2}} \right\} \right),
\end{align*}
where $ \kappa \triangleq \lm^+_{\min}\left(\nabla^2   W_{\gamma, q}^*(u^*)\right)$. 

\end{theorem}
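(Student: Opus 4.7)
The plan is to view problem \eqref{def:populationWBFrech} as an instance of the strongly convex stochastic problem \eqref{eq:gener_risk_min_conv} with $f(p,q) = W_\gamma(p,q)$, $X = \Delta_n$, $\xi = q \sim \PP$, and to apply Theorem \ref{Th:contract_gener} directly. By Theorem \ref{Prop:wass_prop}, $W_\gamma(p,q)$ is $\gamma$-strongly convex in $p$ on $\Delta_n$ in the $\ell_2$-norm and $M$-Lipschitz in $p$ with $M = O(\sqrt{n}\|C\|_\infty)$; it is bounded on $\Delta_n$ by some $B = O(\|C\|_\infty)$ and the Euclidean diameter of the simplex is $D = \sqrt{2}$. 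The Sinkhorn-based oracle $\nabla_p^\delta W_\gamma(p^k,q^k)$ satisfies \eqref{inexact}, matching assumption \eqref{eq:gen_delta}. Substituting these parameters into Theorem \ref{Th:contract_gener} and absorbing constants into $O(\cdot)$ yields the first claim
\[
W_\gamma(\tilde p^N) - W_\gamma(p^*_\gamma) = O\!\left(\frac{M^2\log(N/\alpha)}{\gamma N} + \delta\right).
\]

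To extract the $\e$-level statements I would balance the two error sources: picking $\delta = O(\e)$ kills the persistent bias $\tfrac{3\delta D}{2}$, and choosing $N$ so that $M^2\log(N/\alpha)/(\gamma N) \leq \e$ gives, after absorbing the logarithm,
\[
N \;=\; \widetilde O\!\left(\frac{M^2}{\gamma \e}\right) \;=\; \widetilde O\!\left(\frac{n\|C\|_\infty^2}{\gamma \e}\right),
\]
using $M^2 = O(n\|C\|_\infty^2)$. The $\ell_2$-distance bound is then immediate from the $\gamma$-strong convexity of $W_\gamma(\cdot)$ (inherited from the pointwise strong convexity in Theorem \ref{Prop:wass_prop} by taking expectation): since $p^*_\gamma$ is the minimizer,
\[
\frac{\gamma}{2}\,\|\tilde p^N - p^*_\gamma\|_2^2 \;\leq\; W_\gamma(\tilde p^N) - W_\gamma(p^*_\gamma) \;\leq\; \e,
\]
which rearranges to $\|\tilde p^N - p^*_\gamma\|_2 \leq \sqrt{2\e/\gamma}$.

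For the total arithmetic complexity I would multiply the outer iteration count $N$ by the per-iteration cost of Algorithm \ref{Alg:OnlineGD}, namely one Sinkhorn call with $\ell_2$-accuracy $\delta = O(\e)$ plus a projection onto $\Delta_n$ of cost $O(n\log n)$, the latter being dominated. For the Sinkhorn subroutine two regimes are combined via the minimum in the stated bound: classical Sinkhorn returns a $\delta$-accurate primal gradient in $\widetilde O\bigl(\exp(\|C\|_\infty/\gamma)\bigl(\|C\|_\infty/\gamma + \log(\|C\|_\infty/(\kappa \e^2))\bigr)\bigr)$ outer iterations, while the accelerated scheme of \cite{guminov2019accelerated} needs $\widetilde O\bigl(\sqrt{n\|C\|_\infty^2/(\kappa\gamma\e^2)}\bigr)$; both cost $O(n^2)$ per iteration. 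Multiplying $N$ by $n^2$ and by the minimum of these two factors produces the advertised $\widetilde O\bigl(n^3\|C\|_\infty^2/(\gamma\e)\cdot\min\{\cdot,\cdot\}\bigr)$ bound.

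The step I expect to require the most care is the very last one above. Sinkhorn's convergence is most naturally stated in terms of marginal-constraint violation or dual-objective sub-optimality, whereas assumption \eqref{inexact} demands $\ell_2$-accuracy on the \emph{primal} gradient $\nabla_p W_\gamma(p,q) = u^*$ (cf.\ \eqref{eq:nabla_wass_lagrang}). The quantity $\kappa = \lm^+_{\min}\bigl(\nabla^2 W^*_{\gamma,q}(u^*)\bigr)$ enters precisely through this dictionary: a local quadratic-growth inequality around the dual optimum is needed to turn a $\delta^2$-scale dual gap into a $\delta$-scale bound on $\|u - u^*\|_2$, and hence on $\|\nabla_p^\delta W_\gamma - \nabla_p W_\gamma\|_2$. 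Once this conversion is set up, the remainder of the argument is a direct plug-in into Theorem \ref{Th:contract_gener}.
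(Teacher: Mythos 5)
Your proposal is correct and follows essentially the same route as the paper: instantiate Theorem \ref{Th:contract_gener} with $M = O(\sqrt{n}\|C\|_\infty)$, $B = O(\|C\|_\infty)$, $D = \sqrt{2}$, balance the two error terms to get $\delta$ and $N$, deduce the $\ell_2$ bound from strong convexity, and multiply $N$ by the Sinkhorn/accelerated-Sinkhorn cost. In particular, the step you flag as delicate — converting the dual objective sub-optimality into an $\ell_2$ bound on $u - u^*$ via the quadratic growth modulus $\kappa = \lm^+_{\min}\bigl(\nabla^2 W^*_{\gamma,q}(u^*)\bigr)$ on the subspace orthogonal to $\boldsymbol{1}_n$ — is exactly how the paper introduces $\kappa$ into the complexity bound.
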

\begin{proof}
We estimate the co-domain (image) of $W_\gamma(p,q)$
\begin{align*}
\max_{p,q \in \Delta_n}  W_\gamma (p,q) 
&= \max_{p,q \in \Delta_n} \min_{ \substack{\pi \in \R^{n\times n}_+, \\ \pi \one =p, \\ \pi^T \one = q}} ~ \sum_{i,j=1}^n (C_{ij}\pi_{ij}+\gamma\pi_{ij}\log \pi_{ij})\notag\\
&\leq \max_{\substack{\pi \in \R^{n\times n}_+, \\ \sum_{i,j=1}^n \pi_{ij}=1}}  \sum_{i,j=1}^n(C_{ij}\pi_{ij}+\gamma\pi_{ij}\log \pi_{ij}) \leq \|C\|_\infty.
\end{align*}
Therefore, $W_\gamma(p,q): \Delta_n\times \Delta_n\rightarrow \left[-2\gamma\log n, \|C\|_\infty\right]$.
Then we apply Theorem \ref{Th:contract_gener} with $B =\|C\|_\infty $  and $D =\max\limits_{p',p''\in \Delta_n}\|p'-p''\|_2 = \sqrt{2}$, and we sharply get
\begin{equation*}
W_{\gamma}(\tilde p^N) - W_{\gamma}(p^*_{\gamma})
 = O\left(\frac{M^2\log(N/\alpha)}{\gamma N} +  \delta  \right),  
\end{equation*}
Equating each terms in the r.h.s. of this equality to $\e/2$ and using $M=O(\sqrt n \|C\|_\infty)$,  we get the expressions for $N$ and $\delta$. The statement 
$\|\tilde p^N - p^*_{\gamma}\|_2 \leq \sqrt{2\e/\gamma}$
follows directly from strong convexity of $W_\gamma(p,q)$ and  $W_\gamma(p)$.

The proof of algorithm complexity  follows from the complexity
of the  Sinkhorn's algorithm.
To state the complexity of the Sinkhorn's \avg{algorithm} we firstly define \avg{$\tilde\delta$ as the accuracy in function value of the inexact solution $u$ of  maximization problem in  \eqref{eq:dual_Was}.}
Using this 
we formulate the number of iteration of the Sinkhorn's 
\cite{franklin1989scaling,carlier2021linear,kroshnin2019complexity,stonyakin2019gradient} 
\begin{align}\label{eq:sink}
 \widetilde O \left( \exp\left( \frac{\|C\|_{\infty}}{\gamma} \right) \left( \frac{\|C\|_{\infty}}{\gamma} + \log\left(\frac{\|C\|_{\infty}}{\tilde \delta} \right) \right)\right). 
\end{align}
The number of iteration for the accelerated Sinkhorn's can be improved \cite{guminov2019accelerated}
\begin{equation}\label{eq:accel}
\widetilde{O} \left(\sqrt{\frac{n \|C\|^2_\infty}{\gamma \e'}} \right). 
\end{equation}
Here $\e'$ is the accuracy in the function value, which is the expression 
$ \la u,p\ra + \la\nu,q\ra  - \gamma\sum_{i,j=1}^n\exp\left( {(-C_{ji}+u_i+\nu_j)}/{\gamma} -1  \right)$ under the maximum in \eqref{eq:dual_Was}.
From strong convexity of this objective on the space orthogonal to eigenvector $\boldsymbol 1_n$ corresponds to the eigenvalue $0$ for this function, it follows that 
\begin{equation}\label{eq:str_k}
  \e'\geq \frac{\gamma}{2}\|u - u^*\|^2_2 = \frac{\kappa}{2}\delta,  
\end{equation}
 where $\kappa \triangleq \lm^+_{\min}\left(\nabla^2   W_{\gamma, q}^*(u^*)\right)$.  From \citep[Proposition A.2.]{bigot2019data}, for the eigenvalue of $\nabla^2 W^*_{\gamma,q}(u^*)$ it holds that $0=\lm_n\left(\nabla^2   W_{\gamma, q}^*(u^*)\right) < \lm_k\left(\nabla^2   W_{\gamma, q}^*(u^*)\right)  \text{ for all } k=1,...,n-1$. Inequality \eqref{eq:str_k}   holds due to  $\nabla^\delta_p W_\gamma(p,q) := u$ in Algorithm \ref{Alg:OnlineGD} and $\nabla_p W_\gamma(p,q) \triangleq u^*$ in \eqref{eq:nabla_wass_lagrang}. 
Multiplying both of estimates \eqref{eq:sink} and \eqref{eq:accel} by
the complexity of each iteration of the (accelerated) Sinkhorn's algorithm $\avg{O}(n^2)$ and 
\ag{the} number of  iterations $
N =\widetilde O \left( \frac{M^2}{\gamma \e} \right)$ \ag{(measures)} of Algorithm \ref{Alg:OnlineGD},
and
taking the minimum,  we get the last statement of the theorem.
\end{proof}

 Next, we study the practical convergence of projected stochastic gradient descent (Algorithm  \ref{Alg:OnlineGD}).
 Using the fact that the true Wasserstein barycenter of one-dimensional Gaussian measures 
has closed form expression for the mean and the variance \cite{delon2020wasserstein}, we study the convergence to the true barycenter of
 the generated  truncated Gaussian measures.  Figure \ref{fig:gausbarSGD} illustrates the convergence in the $2$-Wasserstein distance within 40 seconds.
\begin{figure}[ht!]
\centering
\includegraphics[width=0.45\textwidth]{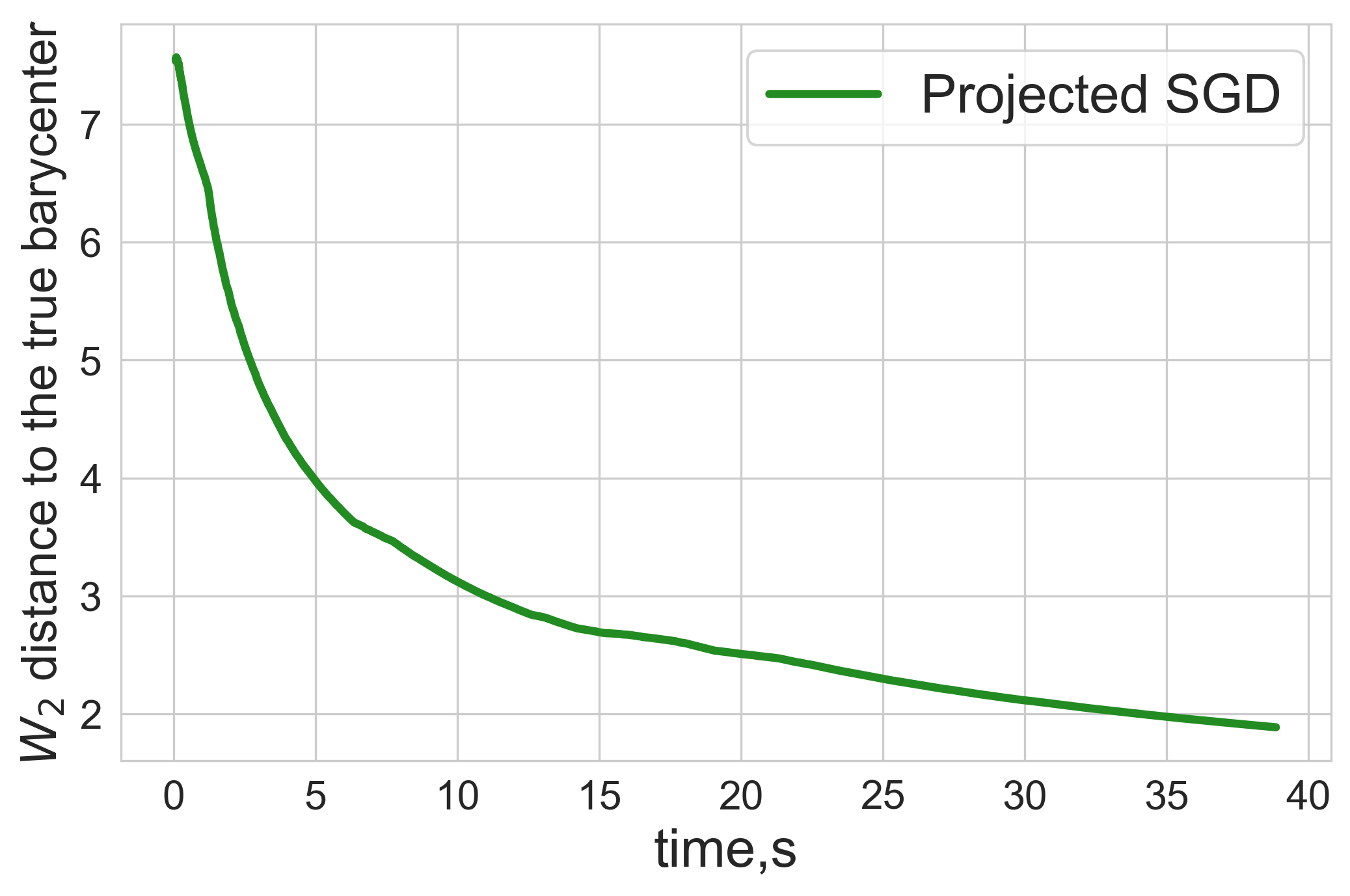}
\caption{Convergence of projected stochastic gradient descent to the true barycenter of $2\times10^4$ Gaussian measures in the $2$-Wasserstein distance. }
\label{fig:gausbarSGD}
\end{figure}

\subsection{The SAA Approach }

The empirical counterpart of problem \eqref{def:populationWBFrech} is the (empirical)  Wasserstein barycenter problem  
\begin{equation}\label{EWB_unregFrech}
\min_{p\in \Delta_n} \frac{1}{m}\sum_{i=1}^m W_\gamma(p,q_i),
\end{equation}
where  $q_1, q_2,...,q_m$ are some realizations of random variable  with distribution $\mathbb P$.

Let us define  $\hat p_\gamma^m \triangleq \arg  \min\limits_{p\in \Delta_n}{\frac{1}{m}}\sum_{i=1}^m W_\gamma(p,q_i)$  and its $\e'$-approximation $\hat p_{\e'}$ such that 
\begin{equation}\label{eq:fidelity_wass}
\frac{1}{m} \sum_{i=1}^m W_{\gamma}( \hat p_{\e'}, q_i) - \frac{1}{m} \sum_{i=1}^m W_{\gamma}(\hat p^m_{\gamma}, q_i)  \leq \e'.
\end{equation}
For instance, $\hat p_{\e'}$ can be calculated by the IBP algorithm \cite{benamou2015iterative} or the accelerated IBP algorithm \cite{guminov2019accelerated}.
The next theorem  specifies Theorem \ref{Th:contractSAA} for the Wassertein barycenter problem \eqref{EWB_unregFrech}.   
 
\begin{theorem}\label{Th:contract2}
Let $\hat p_{\e'}$ satisfies \eqref{eq:fidelity_wass}.
Then,  with probability  at least $1-\alpha$  
\begin{align*}
 W_{\gamma}( \hat p_{\e'}) -  W_{\gamma}(p_{\gamma}^*)
   &\leq \sqrt{\frac{2M^2}{\gamma}\e'}  +\frac{4M^2}{\alpha\gamma m},
\end{align*}
where $p^*_{\gamma} \triangleq \arg\min\limits_{p\in  \Delta_n}
W_\gamma(p)$.
Let $\e' = O \left(\frac{\e^2\gamma}{n\|C\|_\infty^2}  \right)$ and $m = O\left( \frac{M^2}{\alpha \gamma \e} \right) =O\left( \frac{n\|C\|_\infty^2}{\alpha \gamma \e} \right)$. Then, with probability  at least $1-\alpha$  
\[W_{\gamma}( \hat p_{\e'}) -  W_{\gamma}(p_{\gamma}^*)\leq \e \quad \text{and} \quad \|\hat p_{\e'} - p^*_{\gamma}\|_2 \leq \sqrt{2\e/\gamma}.\]
The total complexity  of the accelerated IBP computing $\hat p_{\e'}$     is 
\begin{equation*}
\widetilde O\left(\frac{n^4\|C\|_\infty^4}{\alpha \gamma^2\e^2} \right).
\end{equation*}
\end{theorem}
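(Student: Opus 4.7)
The proof is essentially a specialization of Theorem \ref{Th:contractSAA} to the empirical Wasserstein barycenter problem \eqref{EWB_unregFrech}, combined with a complexity estimate for the accelerated IBP algorithm that produces $\hat p_{\e'}$. The plan is to verify that $f(p,q) = W_\gamma(p,q)$ satisfies the hypotheses of Theorem \ref{Th:contractSAA}, then plug in and finally account for the cost of obtaining the inexact minimizer.

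First, by Theorem \ref{Prop:wass_prop}, $W_\gamma(p,q)$ is $\gamma$-strongly convex in $p$ in the $\ell_2$-norm and $M$-Lipschitz in $p$ with $M = O(\sqrt{n}\|C\|_\infty)$ (the discussion following Theorem \ref{Prop:wass_prop} justifies the rough bound on $M$, and the issue of restricting to $\Delta_n^\rho$ via slight perturbation is addressed there as well). Moreover, $W_\gamma(p,q) \in [-2\gamma \log n,\|C\|_\infty]$, as derived in the proof of Theorem \ref{Th:contract}. Hence the assumptions of Theorem \ref{Th:contractSAA} apply with $f(p,q)=W_\gamma(p,q)$, $F = W_\gamma$, $x^* = p^*_\gamma$, $\hat x^* = \hat p^m_\gamma$, and $\hat x_{\e'} = \hat p_{\e'}$. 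Substituting directly yields the first display
\[
W_\gamma(\hat p_{\e'}) - W_\gamma(p^*_\gamma) \leq \sqrt{\tfrac{2M^2}{\gamma}\e'} + \tfrac{4M^2}{\alpha\gamma m},
\]
with probability at least $1-\alpha$.

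Next, I would equate each of the two terms on the right-hand side to $\e/2$, which (as in the end of the proof of Theorem \ref{Th:contractSAA}) gives $\e' = O(\e^2\gamma/M^2) = O(\e^2\gamma/(n\|C\|_\infty^2))$ and $m = O(M^2/(\alpha\gamma\e)) = O(n\|C\|_\infty^2/(\alpha\gamma\e))$. Strong convexity of $W_\gamma(p)$ in $p$ then yields $\|\hat p_{\e'}-p^*_\gamma\|_2 \leq \sqrt{2\e/\gamma}$, finishing the statistical part.

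For the complexity claim, I would use the accelerated IBP bound from \cite{guminov2019accelerated}: computing $\hat p_{\e'}$ satisfying \eqref{eq:fidelity_wass} for $m$ measures of support size $n$ costs $\widetilde O\bigl(mn^{5/2}\|C\|_\infty/\sqrt{\gamma\e'}\bigr)$ arithmetic operations (each iteration handles all $m$ measures with $n^2$ work per measure, and the number of iterations scales like $\sqrt{n}\|C\|_\infty/\sqrt{\gamma\e'}$). Substituting the expressions for $m$ and $\e'$ from the previous step gives
\[
\widetilde O\!\left(\frac{n\|C\|_\infty^2}{\alpha\gamma\e}\cdot n^{5/2}\|C\|_\infty \cdot \sqrt{\frac{n\|C\|_\infty^2}{\gamma^2\e^2}}\right) = \widetilde O\!\left(\frac{n^4\|C\|_\infty^4}{\alpha\gamma^2\e^2}\right),
\]
which is the asserted total complexity. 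The main subtlety is keeping the dependence on $\gamma$, $\|C\|_\infty$, and $n$ consistent between the sample-size bound $m$, the auxiliary precision $\e'$, and the per-problem accelerated-IBP iteration count; once the constants from Theorem \ref{Prop:wass_prop} and from \cite{guminov2019accelerated} are tracked, the computation is routine.
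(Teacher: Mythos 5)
Your proposal matches the paper's proof: it invokes Theorem \ref{Th:contractSAA} for the statistical bound, reads off $\e'$ and $m$ by equating the two error terms to $\e/2$, and multiplies the accelerated IBP complexity $\widetilde O\left(mn^2\sqrt{n}\|C\|_\infty/\sqrt{\gamma\e'}\right)$ from \cite{guminov2019accelerated} through the same substitutions to arrive at $\widetilde O\left(n^4\|C\|_\infty^4/(\alpha\gamma^2\e^2)\right)$. The only difference is that you explicitly verify the strong-convexity and Lipschitz hypotheses via Theorem \ref{Prop:wass_prop}, which the paper leaves implicit; the argument is otherwise identical and correct.
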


\begin{proof}
From Theorem \ref{Th:contractSAA} we get the first statement of the theorem
\[ W_{\gamma}( \hat p_{\e'}) -  W_{\gamma}(p_{\gamma}^*)
  \leq \sqrt{\frac{2M^2}{\gamma}\e'}  +\frac{4M^2}{\alpha\gamma m}. \]
  From \cite{guminov2019accelerated}
  we have that complexity of the accelerated IBP is
  \[
  \widetilde O\left(\frac{mn^2\sqrt n\|C\|_\infty}{\sqrt{\gamma \e'}} \right).
  \]
  Substituting the expression for $m$ and the expression for $\e'$ from Theorem \ref{Th:contractSAA} 
   \[\e' = O \left(\frac{\e^2 \gamma}{M^2}  \right), \qquad m = O\left( \frac{M^2}{\alpha \gamma \e} \right)\]
  to this equation we get the final statement of the theorem and finish the proof.
\end{proof}

 Next, we study the practical convergence of the Iterative Bregman Projections on truncated Gaussian measures.
 Figure \ref{fig:gausbarSGD} illustrates the convergence of the barycenter calculated by the IBP algorithm to the true barycenter of Gaussian measures in the $2$-Wasserstein distance within 10 seconds. For  the  convergence to the true barycenter w.r.t. the $2$-Wasserstein distance in the SAA approach, we refer to 
 \cite{boissard2015distribution}, however,  considering the  convergence  in the $\ell_2$-norm  (Theorem \ref{Th:contract2}) allows to obtain better convergence rate in comparison with the bounds for the $2$-Wasserstein distance.

\begin{figure}[ht!]
\centering
\includegraphics[width=0.5\textwidth]{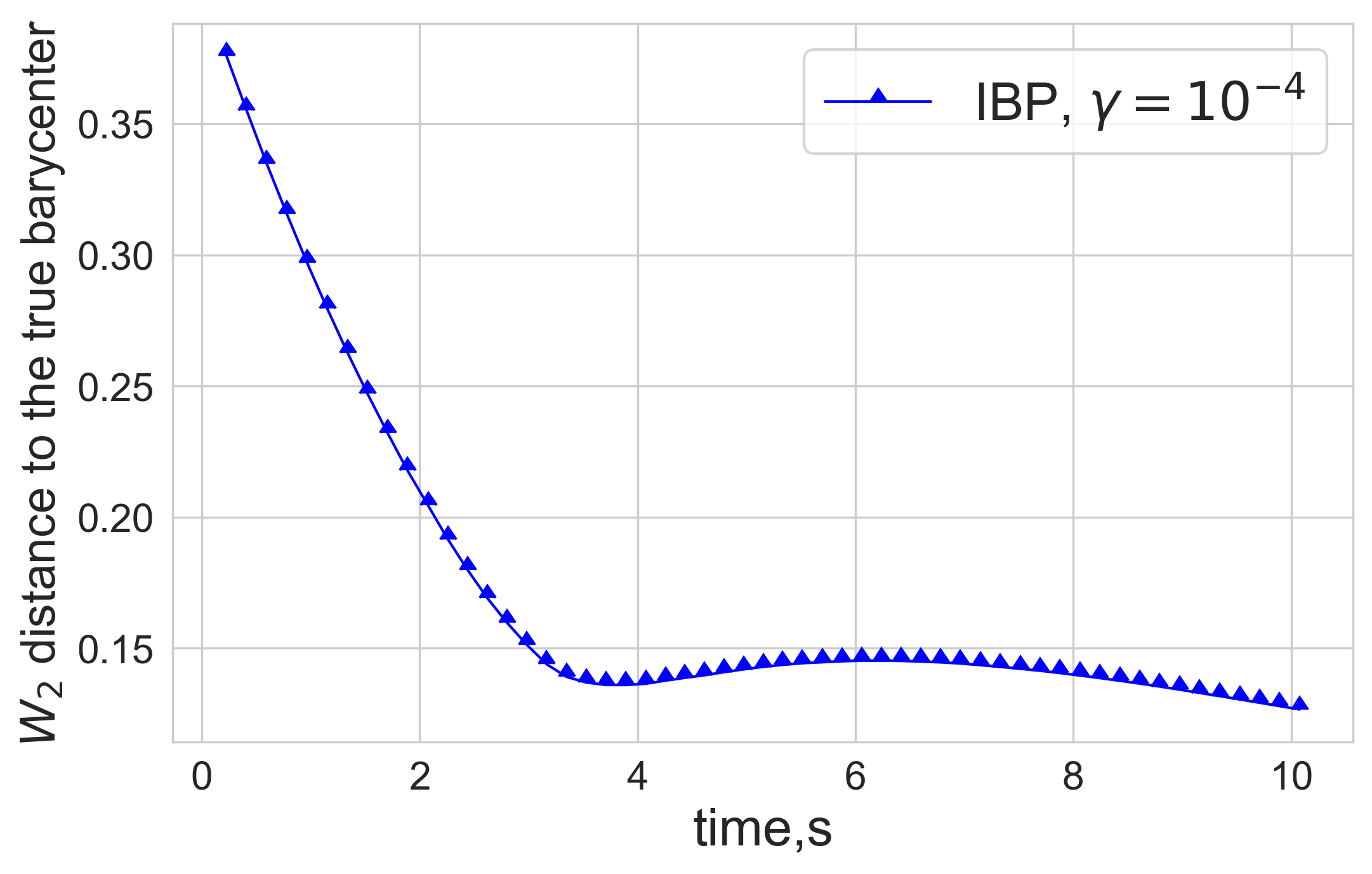}
\caption{Convergence of the Iterative Bregman Projections to the true barycenter of $2\times10^4$ Gaussian measures in the $2$-Wasserstein distance. }
\label{fig:gausbarIBP}
\end{figure}

 \subsection{Comparison of the SA and the SAA for the WB Problem}
 
Now we compare the complexity bounds for the  SA and the SAA implementations solving problem \eqref{def:populationWBFrech}.  
For the brevity, we skip the high probability details since we can fixed $\alpha$ (say $\alpha = 0.05$) in the all bounds. 
Moreover, based on \cite{shalev2009stochastic}, we assume that in fact  all  bounds of this paper have logarithmic dependence on  $\alpha$ which is hidden in  $\widetilde{O}(\cdot)$ \ddd{\cite{feldman2019high,klochkov2021stability}}.

 \begin{table}[ht!]
     \caption{Total complexity of the SA and the SAA implementations for  $ \min\limits_{p\in \Delta_n}\E_q W_\gamma(p,q)$. }
     \small
     \hspace{-0.5cm}
{ \begin{tabular}{ll}
     \toprule
   \textbf{Algorithm}  &  \textbf{Complexity}  \\
        \midrule
          Projected SGD (SA) & $     \widetilde O\left( \frac{n^3\|C\|^2_\infty}{\gamma\e} \min\left\{ \exp\left( \frac{\|C\|_{\infty}}{\gamma} \right) \left( \frac{\|C\|_{\infty}}{\gamma} + \log\left(\frac{\|C\|_{\infty}}{\kappa \e^2} \right) \right),  \sqrt{\frac{n \|C\|^2_{\infty}}{ \kappa\gamma \e^2}} \right\} \right)$ \\
         \midrule
       Accelerated IBP (SAA) &  
       $\widetilde O\left(\frac{n^4\|C\|_{\infty}^4}{\gamma^2\varepsilon^2}\right)$\\
        \bottomrule
    \end{tabular}}
    \label{Tab:entropic_OT2}
\end{table}

  Table \ref{Tab:entropic_OT2} presents the total complexity of the numerical algorithms  implementing the SA and the SAA approaches. 
When $\gamma$ is not too \ag{large}, the complexity in the first row of the table  is achieved by the second term under the minimum, namely \[\widetilde O \left(\frac{n^3\sqrt n  \|C\|^3_\infty}{\gamma \sqrt{\gamma \kappa }\e^2}\right),\]
where   $ \kappa \triangleq \lm^+_{\min}\left(\nabla^2   W_{\gamma, q}^*(u^*)\right)$. This is \dm{typically bigger than the SAA complexity when $\kappa \ll \gamma/n$.}
 Hereby, the SAA approach may outperform the SA approach provided that the regularization parameter $\gamma$ is not too large.

From the practical point of view,  the SAA implementation  converges   much faster  than the SA implementation. 
Executing the SAA  algorithm in a distributed  manner only enhances this superiority since for the case when the objective is not Lipschitz smooth, the distributed implementation of the SA approach is not possible. This is the case of the Wasserstein barycenter problem, indeed,  the objective  is Lipschitz continuous but not Lipschitz smooth.

\section{Fr\'{e}chet Mean  with respect to  Optimal Transport}\label{sec:unreg}

Now we are interested in finding a Fr\'{e}chet mean  with respect to  optimal transport
\begin{equation}\label{def:population_unregFrech}
\min_{p\in \Delta_n}W(p) \triangleq \E_q W (p,q).
\end{equation}

\subsection{The SA Approach with Regularization: Stochastic Gradient Descent}
The next theorem explains how the solution of strongly convex problem \eqref{def:populationWBFrech}    approximates
a solution of convex problem
\eqref{def:population_unregFrech} under the proper choice of the regularization parameter $\gamma$.

\begin{theorem}\label{Th:SAunreg}
Let $\tilde p^N \triangleq \frac{1}{N}\sum_{k=1}^{N}p^k $ be the average of  $N$ online outputs of Algorithm  \ref{Alg:OnlineGD} run with  $\delta = O\left(\e\right)$ and  $
N =\widetilde O \left( \frac{n\|C\|_\infty^2}{\gamma \e} \right)
$. Let $\gamma = \avg{{\e}/{(2 \mathcal{R}^2)}  } $ with $\mathcal{R}^2 = 2 \log n$. Then,  with probability  
at least $1-\alpha$  the following holds
\begin{equation*}
 W(\tilde p^N) - W(p^*) \leq \e,
\end{equation*}
 where $p^*$ is a solution of \eqref{def:population_unregFrech}.
 
 The total complexity of 
 Algorithm  \ref{Alg:OnlineGD} with the accelerated Sinkhorn
is
\[\widetilde O \left(\frac{n^3\sqrt n  \|C\|^3_\infty}{\gamma \sqrt{\gamma \kappa }\e^2}\right)= \widetilde O \left(\frac{n^3\sqrt n  \|C\|^3_\infty}{\e^3 \sqrt{\e \kappa }}\right).\]
where $ \kappa \triangleq \lm^+_{\min}\left(\nabla^2   W_{\gamma, q}^*(u^*)\right)$. 
\end{theorem}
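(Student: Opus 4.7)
The plan is to reduce the unregularized guarantee to the strongly-convex rate of Theorem \ref{Th:contract} via a uniform sandwich between $W$ and $W_\gamma$, then optimize $\gamma$ to balance the regularization bias against the target accuracy $\e$, and finally read off the arithmetic cost from the accelerated-Sinkhorn branch in Theorem \ref{Th:contract}.

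First I would establish that, for every $p,q\in\Delta_n$,
\[
W_\gamma(p,q)\;\le\;W(p,q)\;\le\;W_\gamma(p,q)+2\gamma\log n.
\]
The lower inequality is immediate since $E(\pi)\ge 0$ on $\Delta_{n^2}$, so dropping $-\gamma E(\pi)$ from the objective of $W_\gamma$ can only increase it. For the upper one, evaluating $\langle C,\cdot\rangle$ at the optimal coupling $\pi^\ast_\gamma$ of $W_\gamma(p,q)$ gives $W(p,q)\le\langle C,\pi^\ast_\gamma\rangle=W_\gamma(p,q)+\gamma E(\pi^\ast_\gamma)$, and $E(\pi^\ast_\gamma)\le\log(n^2)=2\log n$ since $\pi^\ast_\gamma\in\Delta_{n^2}$. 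Taking expectation over $q\sim\PP$ preserves the sandwich for the averaged functionals on $\Delta_n$, so
\begin{align*}
W(\tilde p^N)-W(p^\ast)
&\le\bigl(W_\gamma(\tilde p^N)+2\gamma\log n\bigr)-W_\gamma(p^\ast)\\
&\le W_\gamma(\tilde p^N)-W_\gamma(p^\ast_\gamma)+2\gamma\log n,
\end{align*}
using the optimality of $p^\ast_\gamma$ for $W_\gamma(\cdot)$ in the last step.

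Next, setting $\gamma=\e/(2\mathcal R^2)=\e/(4\log n)$ makes the regularization bias exactly $\e/2$. For the residual term I would invoke Theorem \ref{Th:contract} with target accuracy $\e/2$: its conclusion, at $\delta=O(\e)$ and $N=\widetilde O(n\|C\|_\infty^2/(\gamma\e))$, ensures $W_\gamma(\tilde p^N)-W_\gamma(p^\ast_\gamma)\le\e/2$ with probability at least $1-\alpha$; combining the two halves yields $W(\tilde p^N)-W(p^\ast)\le\e$.

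For the arithmetic cost I would take the accelerated-Sinkhorn branch of the minimum in Theorem \ref{Th:contract} and substitute $\gamma=\widetilde O(\e)$, obtaining
\[
\widetilde O\!\left(\frac{n^{3}\|C\|_\infty^{2}}{\gamma\e}\sqrt{\frac{n\|C\|_\infty^{2}}{\kappa\gamma\e^{2}}}\right)
=\widetilde O\!\left(\frac{n^{3}\sqrt n\,\|C\|_\infty^{3}}{\gamma\sqrt{\gamma\kappa}\,\e^{2}}\right)
=\widetilde O\!\left(\frac{n^{3}\sqrt n\,\|C\|_\infty^{3}}{\e^{3}\sqrt{\e\kappa}}\right),
\]
matching the announced bound. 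The only delicate point is to confirm that the Lipschitz constant $M=O(\sqrt n\,\|C\|_\infty)$ feeding Theorem \ref{Th:contract} does not itself deteriorate as $\gamma\to 0$; by Theorem \ref{Prop:wass_prop}, $M$ only picks up a benign additive $\gamma\log n$ contribution that is absorbed once $\gamma\le\e/\log n$, so no hidden factor of $\gamma^{-1}$ enters the budget.
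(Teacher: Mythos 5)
Your proposal is correct and follows essentially the same route as the paper: the same sandwich $W_\gamma \le W \le W_\gamma + 2\gamma\log n$, the same choice $\gamma = \e/(4\log n)$ to make the bias $\e/2$, reduction to Theorem \ref{Th:contract} at accuracy $\e/2$, and substitution of $\gamma=\widetilde O(\e)$ into the accelerated-Sinkhorn branch for the complexity. The only difference is that you spell out the proof of the sandwich inequality and the non-degeneracy of $M$ as $\gamma\to 0$, which the paper handles by citation and by its standing convention $M=O(\sqrt n\,\|C\|_\infty)$.
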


\begin{proof}
The proof of this theorem follows from  Theorem \ref{Th:contract} and the following \cite{gasnikov2015universal,kroshnin2019complexity,peyre2019computational}
\[
 W(p) -  W(p^*) \leq W_{\gamma}(p) - W_{\gamma}(p^*) + 2\gamma\log n \leq  W_{\gamma}(p) -  W_{\gamma}(p^*_{\gamma})+ 2\gamma\log n, 
\]
where  $p \in \Delta_n $, $p^* = \arg\min\limits_{p\in \Delta_n}W(p) $. 
The choice  $\gamma = \frac{\e}{4\log n}$  ensures the following
\begin{equation*}
 W(p) -  W(p^*) \leq  W_{\gamma}(p) - W_{\gamma}(p^*_{\gamma}) + \e/2, \quad \forall p \in \Delta_n.   
\end{equation*}
This means that solving  problem \eqref{def:populationWBFrech} with $\e/2$ precision, we get a solution of problem \eqref{def:population_unregFrech} with $\e$ precision. 

 When $\gamma$ is not too \ag{large},  Algorithm  \ref{Alg:OnlineGD} uses the accelerated  Sinkhorn’s algorithm (instead of Sinkhorn’s algorithm). Thus, using $\gamma  = \frac{\e}{4\log n} $ and meaning that $\e$ is small, we get the complexity according to the statement of the theorem. 

\end{proof}

\subsection{The SA Approach: Stochastic Mirror Descent}
Now we propose an approach  to solve problem \eqref{def:population_unregFrech} without additional regularization.
The approach is based on mirror prox given by the iterative formula \eqref{eq:prox_mirr_step}.  We use simplex setup which 
provides a closed form solution for \eqref{eq:prox_mirr_step}.   Algorithm \ref{Alg:OnlineMD} presents the application of mirror prox to problem \eqref{def:population_unregFrech}, where
the gradient of $ W(p^k,q^k)$ can be calculated using dual representation of OT \cite{peyre2019computational} by any LP solver exactly
\begin{align}\label{eq:refusol}
     W(p,q) = \max_{ \substack{(u, \nu) \in \R^n\times \R^n,\\
     u_i+\nu_j \leq C_{ij}, \forall i,j \in [n]}}\left\{ \la u,p  \ra + \la \nu,q \ra  \right\}.
\end{align}
Then \[
\nabla_p    W(p,q) = u^*,
\]
where $u^*$ is a solution of \eqref{eq:refusol} such that $\la u^*,\one\ra =0$.

\begin{algorithm}[ht!]
\caption{
Stochastic Mirror Descent for the Wasserstein Barycenter Problem}
\label{Alg:OnlineMD}   
\begin{algorithmic}[1]
   \Require starting point $p^1 = (1/n,...,1/n)^T$,
   number of measures  $N$,  $q^1,...,q^N$, accuracy of gradient calculation $\delta$
   \State $\eta = \frac{\sqrt{2\log n}}{\|C\|_{\infty}\sqrt{N}}$
                \For{$k= 1,\dots, N$} 
                \State Calculate $\nabla_{p^k} W(p^k,q^k)$ solving dual LP   by any LP solver 
                \State
             \[p^{k+1} = \frac{p^{k}\odot \exp\left(-\eta\nabla_{p^k} W(p^k,q^k)\right)}{\sum_{j=1}^n [p^{k}]_j\exp\left(-\eta\left[\nabla_{p^k} W(p^k,q^k)\right]_j\right)} \]
                \EndFor
    \Ensure $\breve{p}^N = \frac{1}{N}\sum_{k=1}^{N} p^{k}$
\end{algorithmic}
 \end{algorithm}

The next theorem estimates the complexity of Algorithm \ref{Alg:OnlineMD} 

\begin{theorem}\label{Th:MD}
Let $\breve p^N$ be the output of Algorithm  \ref{Alg:OnlineMD} processing $N$ measures. Then,  with probability  
at least $1-\alpha$  we have 
\begin{equation*}
  W(\breve p^N) - W({p^*})  = O\left(\frac{\|C\|_\infty \sqrt{\log ({n}/{\alpha})}}{\sqrt{N}} \right),  
\end{equation*} 
Let  Algorithm \ref{Alg:OnlineMD} run with    $
N = \widetilde O \left( \frac{M_\infty^2R^2}{\e^2} \right) =\widetilde O \left( \frac{\|C\|_\infty^2}{\e^2} \right)
$, $ R^2 \triangleq {\rm KL}(p^1,p^*) \leq  \log n $.
Then,  with probability  
at least $1-\alpha$  
 \[  W(\breve p^N) - W(p^*) \leq \e.\]
The {total} complexity of Algorithm \ref{Alg:OnlineMD} is
\[ \widetilde O\left( \frac{ n^3 \|C\|^2_\infty}{\e^2}\right).\]
\end{theorem}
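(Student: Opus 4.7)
The plan is to specialize Theorem \ref{Th:MDgener} to the barycenter problem \eqref{def:population_unregFrech}. I would set $F(p) = W(p) = \E_q W(p,q)$ and $f(p,\xi) = W(p,q)$ with $\xi = q$, take $X = \Delta_n$, the starting point $p^1 = (1/n,\ldots,1/n)^\top$, and the prox-function $d(p) = \la p,\log p\ra$ whose Bregman divergence is the KL-divergence. Under this simplex setup the closed-form update of Algorithm \ref{Alg:OnlineMD} is exactly the instantiation of \eqref{eq:prox_mirr_step}, so Theorem \ref{Th:MDgener} applies directly.

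Next I would plug in the four constants appearing in Theorem \ref{Th:MDgener}. The $\ell_1$-Lipschitz constant is $M_\infty = O(\|C\|_\infty)$: by the same argument given for $W_\gamma$ in the discussion after Theorem \ref{Prop:wass_prop}, taking the maximum instead of the $\ell_2$-sum over coordinates yields $\|\nabla_p W(p,q)\|_\infty \le \|C\|_\infty$ (and one can also read this off directly from the dual LP \eqref{eq:refusol}, since an optimal $u^*$ normalized by $\la u^*,\mathbf 1\ra = 0$ satisfies $\|u^*\|_\infty = O(\|C\|_\infty)$). The diameter is $D = \max_{p',p''\in\Delta_n}\|p'-p''\|_1 = 2$. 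The starting radius is $R^2 = {\rm KL}(p^*,p^1) \le \log n$, because $p^1$ is uniform. Since the gradient is computed exactly by an LP solver, the oracle error is $\delta = 0$. Theorem \ref{Th:MDgener} then immediately gives
\[
W(\breve p^N) - W(p^*) = O\!\left(\frac{\|C\|_\infty\sqrt{\log(n/\alpha)}}{\sqrt{N}}\right).
\]
Equating the right-hand side to $\e$ yields $N = \widetilde O(\|C\|_\infty^2/\e^2)$, which proves the second claim.

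For the total complexity, each of the $N$ iterations involves (i) sampling one measure $q^k$, (ii) a dual-LP solve \eqref{eq:refusol} of size $n^2$ variables and $2n$ constraints, which can be done in $\widetilde O(n^3)$ arithmetic operations, and (iii) one closed-form entropic mirror-descent update of cost $O(n)$. The LP solve dominates, and multiplying by $N$ gives
\[
N \cdot \widetilde O(n^3) \;=\; \widetilde O\!\left(\frac{n^3\|C\|_\infty^2}{\e^2}\right),
\]
as claimed.

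The main obstacle is ensuring the $\ell_1$-Lipschitz estimate $M_\infty = O(\|C\|_\infty)$ is actually valid for the unregularized $W(p,q)$, since Theorem \ref{Prop:wass_prop} is stated for $W_\gamma$; the cleanest route is to bound the optimal dual potentials directly from \eqref{eq:refusol}, using the constraints $[u^*]_i + [\nu^*]_j \le C_{ij}$ together with the normalization $\la u^*,\mathbf 1\ra = 0$. A secondary technicality is that the Lipschitz bound holds on the relative interior of $\Delta_n$; as in the entropic case, this is handled by the noise-and-renormalize device noted after Theorem \ref{Prop:wass_prop}, so no further work is needed.
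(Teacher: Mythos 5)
Your proposal is correct and follows essentially the same route as the paper: instantiate Theorem \ref{Th:MDgener} with the simplex setup, $M_\infty = O(\|C\|_\infty)$, $R^2 = {\rm KL}(p^*,p^1)\le\log n$, and $\delta=0$ since the LP oracle is exact, then multiply the $\widetilde O(n^3)$ per-iteration LP cost by $N$. Your extra care in justifying $M_\infty = O(\|C\|_\infty)$ for the unregularized $W$ via the dual potentials in \eqref{eq:refusol} is a welcome tightening of a point the paper only gestures at, but it does not change the argument.
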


\begin{proof}
From Theorem \ref{Th:MDgener} and using $M_\infty = O\left(\|C\|_\infty\right)$, we have
\begin{align}\label{eq:final_est234}
  W(\breve p^N) -  W(p^*)  & = O\left(\frac{\|C\|_\infty \sqrt{\log ({n}/{\alpha})}}{\sqrt{N}} +2\delta  \right).
\end{align} 
Notice, that $\nabla_{p^k} W(p^k,q^k)$  can be calculated exactly by any LP solver. Thus, we take  $\delta = 0$ in \eqref{eq:final_est234} and get the first statement of the theorem.

The second statement of the theorem directly follows from this and the condition $ W(\breve p^N) - W(p^*)\leq \e$.

To get the complexity bounds we 
notice that the complexity for  calculating  $\nabla_p W(p^k,q^k)$ is $\tilde{O}(n^3)$  \cite{ahuja1993network,dadush2018friendly,dong2020study,gabow1991faster}, multiplying this by $N =  O \left( {\|C\|_\infty^2R^2}/{\e^2} \right) $ with $ R^2 \triangleq {\rm KL}(p^*,p^1) \leq \log n $, we get the last statement of the theorem.
\[\widetilde O(n^3N)  = {\widetilde O\left(n^3 \left(\frac{\|C\|_{\infty} R}{\e}\right)^2\right) =}  \widetilde O\left(n^3 \left(\frac{\|C\|_\infty}{\e}\right)^2\right).\]
\end{proof}

Next we compare  the
SA approaches with and without regularization of optimal transport in problem \eqref{def:population_unregFrech}. Entropic regularization  of optimal transport leads to strong convexity of regularized optimal transport in the $\ell_2$-norm, hence, the Euclidean setup should be used. Regularization parameter $\gamma = \frac{\e}{4 \log n}$ ensures $\e$-approximation for the unregularized solution.
In this case, we use stochastic gradient descent  with Euclidean projection onto simplex  since it converges faster for strongly convex objective. 
For non-regularized problem we can significantly use the simplex prox structure, indeed, we can apply stochastic mirror descent  with simplex setup (the Kullback-Leibler divergence as the Bregman divergence) with Lipschitz constant $M_\infty = O(\|C\|_\infty)$ that is $\sqrt{n}$ better than Lipschitz constant in the Euclidean norm $M = O(\sqrt{n}\|C\|_\infty)$.

We  studied  the convergence of stochastic mirror descent (Algorithm \ref{Alg:OnlineMD}) and stochastic gradient descent (Algorithm  \ref{Alg:OnlineGD}) in the $2$-Wasserstein distance within $10^4$ iterations (processing of $10^4$ probability measures). 
  Figure \ref{fig:gausbarcomparison1} confirms
  better convergence of stochastic mirror descent than projected stochastic gradient descent as stated in their theoretical complexity (Theorems \ref{Th:SAunreg} and \ref{Th:MD}). 
  
\begin{figure}[ht!]
\centering
\includegraphics[width=0.45\textwidth]{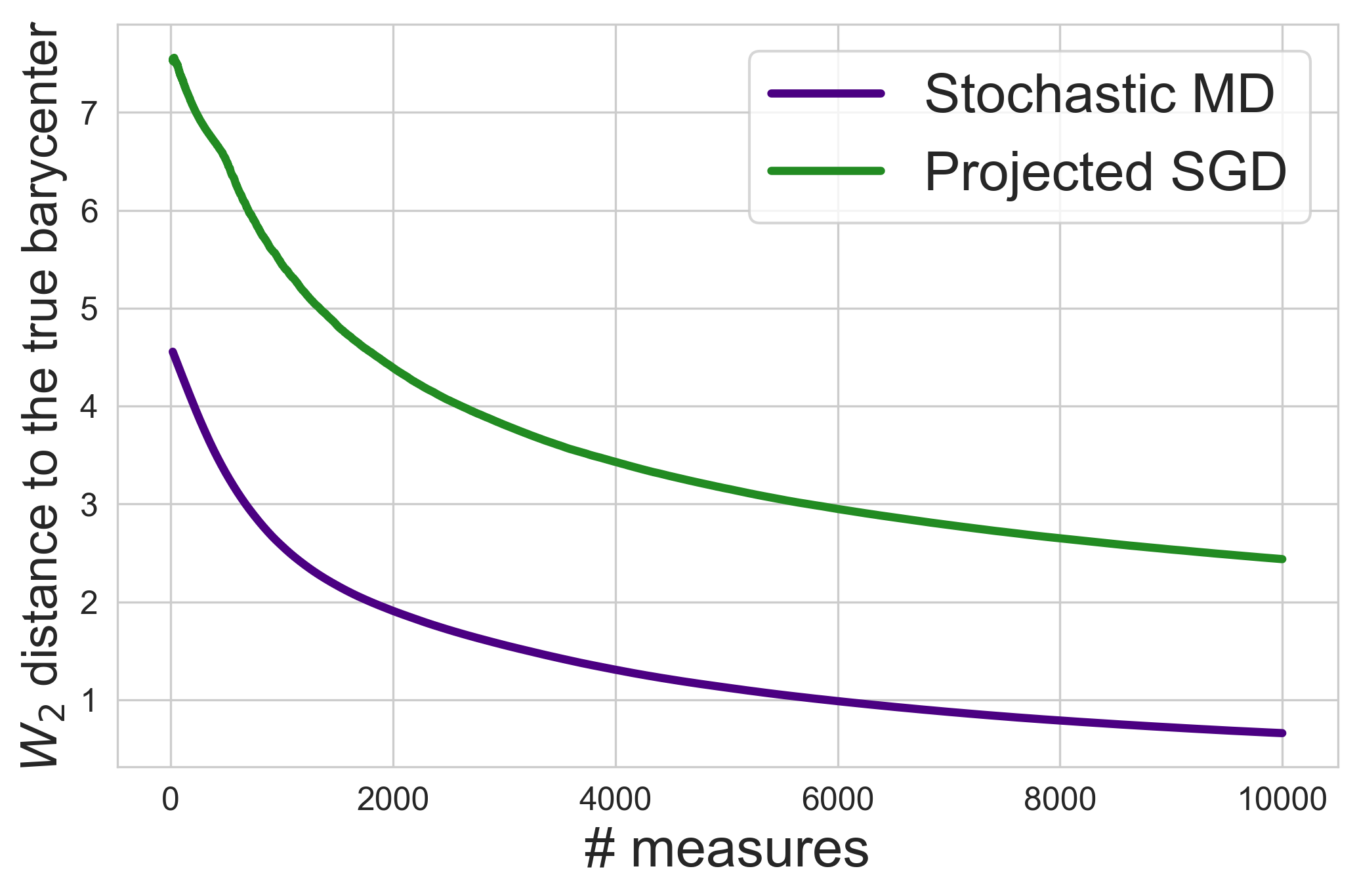}
\caption{Convergence of projected stochastic gradient descent, and stochastic mirror descent to the true barycenter of  $2\times10^4$ Gaussian measures in the $2$-Wasserstein distance. }
\label{fig:gausbarcomparison1}
\end{figure}

\subsection{The SAA Approach }

Similarly for the SA approach, we provide the proper choice of the regularization parameter $\gamma$ in  the SAA approach 
so that the 
solution of strongly convex problem \eqref{def:populationWBFrech}    approximates
a solution of convex problem
\eqref{def:population_unregFrech}.


\begin{theorem}\label{Th:SAAunreg}
Let $\hat p_{\e'}$ satisfy 
\begin{equation*}
\frac{1}{m} \sum_{i=1}^m W_{\gamma}( \hat p_{\e'}, q^i) - \frac{1}{m} \sum_{k=1}^m W_{\gamma}(\hat p^*_{\gamma}, q^i)  \leq \e',
\end{equation*}
where $ \hat p_\gamma^* = \arg\min\limits_{p\in \Delta_n}\dm{\frac{1}{m}}\sum\limits_{i=1}^m W_\gamma(p,q^i)$, $\e' = O \left(\frac{\e^2 \gamma}{n\|C\|_\infty^2}  \right)$, $m = O\left( \frac{n\|C\|_\infty^2}{\alpha \gamma \e} \right)$, and  $\gamma = {\e}/{(2 \mathcal{R}^2)} $ with $\mathcal{R}^2 = 2 \log n$.
Then,  with probability  at least $1-\alpha$  the following holds
\[W( \hat p_{\e'}) -  W(p^*)\leq \e.\]
The total complexity  of the accelerated IBP computing $\hat p_{\e'}$     is 
\begin{equation*}
\widetilde O\left(\frac{n^4\|C\|_\infty^4}{\alpha \e^4} \right). 
\end{equation*}

\end{theorem}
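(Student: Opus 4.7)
The plan is to bootstrap the statement from Theorem \ref{Th:contract2} (SAA for the entropy-regularized barycenter) by choosing the regularization parameter $\gamma$ small enough that the regularization bias is absorbed into the target accuracy. No new probabilistic or concentration argument is needed: the high-probability guarantee comes entirely through Theorem \ref{Th:contract2}.

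First, I would record the standard approximation bound between regularized and unregularized optimal transport: since $0 \leq E(\pi) \leq 2\log n$ for $\pi \in U(p,q) \subset \Delta_{n^2}$, one has $0 \leq W(p,q) - W_\gamma(p,q) \leq 2\gamma \log n$ for all $p,q \in \Delta_n$. Taking expectation in $q$ and using $W_\gamma(p^*_\gamma) \leq W_\gamma(p^*)$, this propagates to
\[
W(\hat p_{\e'}) - W(p^*) \;\leq\; W_\gamma(\hat p_{\e'}) - W_\gamma(p^*_\gamma) + 2\gamma \log n,
\]
exactly as in the proof of Theorem \ref{Th:SAunreg}, only with the SA iterate $\tilde p^N$ replaced by the SAA iterate $\hat p_{\e'}$.

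Second, I would apply Theorem \ref{Th:contract2} with target accuracy $\e/2$. Its prescriptions give $\e' = O(\e^2\gamma/(n\|C\|_\infty^2))$ and $m = O(n\|C\|_\infty^2/(\alpha\gamma\e))$, yielding $W_\gamma(\hat p_{\e'}) - W_\gamma(p^*_\gamma) \leq \e/2$ with probability at least $1-\alpha$. Choosing $\gamma = \e/(2\mathcal{R}^2) = \e/(4\log n)$ with $\mathcal{R}^2 = 2\log n$ forces $2\gamma\log n \leq \e/2$. Adding the two $\e/2$-terms gives $W(\hat p_{\e'}) - W(p^*) \leq \e$ with the claimed probability.

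Third, for the runtime I would substitute $\gamma = \e/(4\log n)$ into the accelerated IBP complexity from Theorem \ref{Th:contract2}, namely $\widetilde O(n^4\|C\|_\infty^4/(\alpha\gamma^2\e^2))$. This produces $\widetilde O(n^4 \|C\|_\infty^4 \log^2 n /(\alpha \e^4))$, and since $\widetilde O(\cdot)$ absorbs the $\log^2 n$ factor, we recover $\widetilde O(n^4\|C\|_\infty^4/(\alpha \e^4))$.

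The main (and essentially only) subtlety is bookkeeping: one must verify that the $\e'$ and $m$ scalings from Theorem \ref{Th:contract2}, when evaluated at $\gamma = \Theta(\e/\log n)$, match the exponents stated here, and that the accelerated IBP complexity $\widetilde O(m n^{2.5}\|C\|_\infty/\sqrt{\gamma\e'})$ collapses cleanly after substitution. There is no new obstacle beyond this substitution, since the regularization-bias bound $2\gamma\log n$ is tight enough to not degrade the rate.
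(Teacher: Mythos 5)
Your proposal is correct and follows exactly the paper's route: the paper's proof is precisely the one-line reduction to Theorem \ref{Th:contract2} combined with the regularization-bias bound $W(p)-W(p^*)\leq W_\gamma(p)-W_\gamma(p^*_\gamma)+2\gamma\log n$ from the proof of Theorem \ref{Th:SAunreg}, with $\gamma=\e/(4\log n)$ and the same substitution into the accelerated IBP complexity. You have merely written out the bookkeeping that the paper leaves implicit.
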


\begin{proof}
The proof follows from Theorem \ref{Th:contract2} and the proof of Theorem \ref{Th:SAunreg} with $\gamma = {\e}/{(4 \log n)} $.
\end{proof}

\subsection{Penalization of the WB problem}

For  the population Wasserstein barycenter problem, we construct 1-strongly convex penalty function in the $\ell_1$-norm based on Bregman divergence.
We consider the following prox-function \cite{ben-tal2001lectures}
\[d(p) = \frac{1}{2(a-1)}\|p\|_a^2, \quad a = 1 + \frac{1}{2\log n}, \qquad p\in \Delta_n\]
 that is 1-strongly convex in the $\ell_1$-norm. Then Bregman divergence $ B_d(p,p^1)$  associated with $d(p)$  is
\[B_d(p,p^1) = d(p) - d(p^1) - \la \nabla d(p^1), p - p^1 \ra.\] 
$B_d(p,p^1)$ is 1-strongly convex w.r.t. $p$ in the $\ell_1$-norm and $\tilde{O}(1)$-Lipschitz continuous in the $\ell_1$-norm on $\Delta_n$. One of the advantages of this penalization compared to the negative entropy penalization proposed in  \cite{ballu2020stochastic,bigot2019penalization}, is that we get the upper bound on the Lipschitz constant, the properties of strong convexity in the $\ell_1$-norm on $\Delta_n$ remain the same. Moreover, this penalization contributes to the better wall-clock time complexity than quadratic penalization \cite{bigot2019penalization} since the constants of Lipschitz continuity for $W(p,q)$ with respect to the $\ell_1$-norm is $\sqrt{n}$ better than with respect to the $\ell_2$-norm   but $R^2 = \|p^* - p^1\|_2^2\leq \|p^* - p^1\|_1^2 \leq  \sqrt{2} $ and $R^2_d = B_d(p^*,p^1) = O(\log n)$  are equal up to a logarithmic factor.

The regularized SAA problem is following
 \begin{equation}\label{EWB_Bregman_Reg}
 \min_{p\in \Delta_n}\left\{\frac{1}{m}\sum_{k=1}^m W(p,q^k) +  \lm B_d(p,p^1)\right\}.
\end{equation}
The next theorem is particular case of Theorem \eqref{th_reg_ERM} for the population WB problem \eqref{def:population_unregFrech} with $r(p,p^1) = B_d(p,p^1)$.
\begin{theorem}\label{Th:newreg}
Let $\hat p_{\e'}$ be  such that
 \begin{equation}\label{EWB_eps}
\frac{1}{m}\sum_{k=1}^m W(\hat p_{\e'},q^k) + \lm B_d(\hat p_{\e'},p^1)  -
\min_{p\in \Delta_n}\left\{\frac{1}{m}\sum_{k=1}^m W(p,q^k) +  \lm B_d(p,p^1)\right\}
\le \e'.
\end{equation}
To satisfy
\[ W( \hat p_{\e'}) -  W(p^*)\leq \e.\]
with probability  at least $1-\alpha$,
 we need to take  $\lm = \e/(2{R_d^2})$ and
 \[m = \widetilde O\left(\frac{ \|C\|_\infty^2}{\alpha \e^2}\right), \]
 where
$ R_d^2 =  B_d(p^*,p^1) = {O(\log n)}$. The precision $\e'$ is defined as
\[\e' = \widetilde O\left(\frac{\e^3}{\|C\|_\infty^2 }\right).\]
The total complexity  of Mirror Prox computing $\hat p_{\e'}$  is 
\[ 
  \widetilde O\left(  \frac{ n^2\sqrt n\|C\|^5_\infty}{\e^5} \right).\]
\end{theorem}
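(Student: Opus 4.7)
\textbf{Proof plan for Theorem \ref{Th:newreg}.} The strategy is to specialize the regularization template of Theorem \ref{th_reg_ERM} from the $\ell_2$ setup to the $\ell_1$/simplex geometry that is natural for the Wasserstein barycenter problem, and then convert the resulting sample and accuracy bounds into a wall-clock bound by plugging in the complexity of Mirror Prox on the regularized empirical problem.

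\emph{Step 1 (norm-adapted regularization).} First, I would record that $B_d$ is $1$-strongly convex in the $\ell_1$-norm on $\Delta_n$ (the classical $p$-norm prox of \cite{ben-tal2001lectures}) and $\widetilde O(1)$-Lipschitz continuous in the $\ell_1$-norm on $\Delta_n$. Combined with the $\ell_1$-Lipschitz continuity of $W(p,q)$ with constant $M_\infty=O(\|C\|_\infty)$ (Theorem \ref{Prop:wass_prop}), this makes $\hat F_\lm(p)=\tfrac{1}{m}\sum_{i=1}^m W(p,q^i)+\lm B_d(p,p^1)$ simultaneously $\lm$-strongly convex and $M_{\infty,\lm}=M_\infty+\widetilde O(\lm)=O(\|C\|_\infty)$-Lipschitz in the $\ell_1$-norm, as long as $\lm\ll M_\infty$. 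The proofs of Lemma \ref{Lm:pop_sub_opt} and Theorem \ref{th_reg_ERM} are norm-free: they only use Lipschitz continuity and strong convexity measured in the same norm, plus Jensen's inequality and Theorem \ref{Th:shalev2009stochastic} (which is itself stated without a preferred norm). Repeating those arguments verbatim with $\|\cdot\|_2\rightsquigarrow\|\cdot\|_1$ and $M\rightsquigarrow M_\infty$ yields the $\ell_1$-variant of Lemma \ref{Lm:pop_sub_opt}: with probability at least $1-\alpha$, for every $p\in\Delta_n$,
\begin{equation*}
F_\lm(p)-F_\lm(p^*_\lm)\le \sqrt{\tfrac{2M_\infty^2}{\lm}\bigl(\hat F_\lm(p)-\hat F_\lm(\hat p_\lm)\bigr)}+\tfrac{4M_\infty^2}{\alpha\lm m},
\end{equation*}
where $F_\lm(p)=\E_q W(p,q)+\lm B_d(p,p^1)$ and $\hat p_\lm=\arg\min_{\Delta_n}\hat F_\lm$.

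\emph{Step 2 (plug in the $\e'$-minimizer and balance).} Setting $p=\hat p_{\e'}$, using definition \eqref{EWB_eps}, decomposing
$W(\hat p_{\e'})-W(p^*)\le F_\lm(\hat p_{\e'})-F_\lm(p^*_\lm)+\bigl[F_\lm(p^*_\lm)-W(p^*)\bigr]$
(we drop the non-negative term $\lm B_d(\hat p_{\e'},p^1)$ on the left), and bounding $F_\lm(p^*_\lm)\le F_\lm(p^*)=W(p^*)+\lm R_d^2$, I would obtain
\begin{equation*}
W(\hat p_{\e'})-W(p^*)\;\le\;\sqrt{\tfrac{2M_\infty^2\e'}{\lm}}+\tfrac{4M_\infty^2}{\alpha\lm m}+\lm R_d^2.
\end{equation*}
The choice $\lm=\e/(2R_d^2)$ makes the third term $\e/2$; balancing each of the first two terms against $\e/4$ and substituting $M_\infty=O(\|C\|_\infty)$ and $R_d^2=O(\log n)$ (absorbed into $\widetilde O(\cdot)$) gives exactly the stated $\e'=\widetilde O(\e^3/\|C\|_\infty^2)$ and $m=\widetilde O(\|C\|_\infty^2/(\alpha\e^2))$.

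\emph{Step 3 (Mirror Prox complexity).} This is where I expect the main obstacle: steps 1 and 2 are a clean transcription of the earlier regularization template, but the final arithmetic complexity requires choosing a solver for \eqref{EWB_Bregman_Reg} and bookkeeping its dependence on $\e'$ and $m$. I would rewrite each $W(p,q^i)$ in its saddle-point dual form \eqref{eq:refusol}, turning \eqref{EWB_Bregman_Reg} into a bilinear saddle-point problem on $\Delta_n$ (primal, regularized by the $\lm$-strongly-convex, $\ell_1$-simplex prox $B_d$) against the transport dual variables, whose bilinear coupling has norm $O(\|C\|_\infty)$ in the simplex setup. Mirror Prox then reaches primal-dual gap $\e'$ in $\widetilde O(\sqrt{n}\,\|C\|_\infty/\e')$ iterations, with each iteration costing $\widetilde O(n^2)$ per measure. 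Multiplying by $m$ measures yields total work $\widetilde O\bigl(m\,n^2\,\sqrt{n}\,\|C\|_\infty/\e'\bigr)$; substituting $m=\widetilde O(\|C\|_\infty^2/\e^2)$ and $\e'=\widetilde O(\e^3/\|C\|_\infty^2)$ gives the claimed $\widetilde O(n^2\sqrt{n}\,\|C\|_\infty^5/\e^5)$.
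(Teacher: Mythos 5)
Your proposal is correct and follows essentially the same route as the paper: specialize Theorem~\ref{th_reg_ERM} to get $m$ and $\e'$ under $\lm=\e/(2R_d^2)$ with $M_\infty=O(\|C\|_\infty)$ and $R_d^2=O(\log n)$, then bound the cost of Mirror Prox on the bilinear saddle-point reformulation of \eqref{EWB_Bregman_Reg}, giving $\widetilde O\left(mn^2\sqrt{n}\,\|C\|_\infty/\e'\right)$ and hence the stated total. Your Step~1, which explicitly justifies transporting the $\ell_2$-stated regularization template of Lemma~\ref{Lm:pop_sub_opt} and Theorem~\ref{th_reg_ERM} to the $\ell_1$/simplex geometry, spells out a norm adaptation that the paper's proof applies without comment.
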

\begin{proof}
The proof is based on saddle-point reformulation of the WB problem. 
Further, we provide the explanation how to do this.
Firstly 
we   rewrite  the OT    as \cite{jambulapati2019direct}
\begin{equation}\label{eq:OTreform}
     W(p,q) = \min_{x \in \Delta_{n^2}} \max_{y\in [-1,1]^{2n}} \{d^\top x +2\|d\|_\infty(~ y^\top Ax -b^\top y)\},
\end{equation}
where  $b = 
(p^\top, q^\top)^\top$,  $d$ is vectorized cost matrix of $C$, $x$ be vectorized transport plan of  $X$,  and  $A=\{0,1\}^{2n\times n^2}$ is an incidence matrix.
Then we reformulate the WB problem as a saddle-point problem \cite{dvinskikh2020improved}
\begin{align}\label{eq:alm_distr}
         \min_{ \substack{ p \in \Delta^n, \\ \x \in \X \triangleq \underbrace{\Delta_{n^2}\times \ldots \times \Delta_{n^2}}_{m} }}    \max_{ \y \in  [-1,1]^{2mn}}
        \frac{1}{m} \left\{\boldsymbol d^\top \x +2\|d\|_\infty\left(\y^\top\boldsymbol A \x -\mathbf b^\top \y \right)\right\}, 
        \end{align}
    where
     $\x = (x_1^\top ,\ldots,x_m^\top )^\top  $,
  $\y = (y_1^\top,\ldots,y_m^\top)^\top $, 
    $\mathbf b = (p^\top, q_1^\top, ..., p^\top, q_m^\top)^\top$, 
$\boldsymbol d = (d^\top, \ldots, d^\top )^\top $, 
and    $\boldsymbol A = {\rm diag}\{A, ..., A\} \in \{0,1\}^{2mn\times mn^2}$ is  block-diagonal matrix.   
Similarly to \eqref{eq:alm_distr} we reformulate  \eqref{EWB_Bregman_Reg} as a saddle-point problem 
\begin{align*}
         \min_{ \substack{ p \in \Delta^n, \\ \x \in \X}}    \max_{ \y \in  [-1,1]^{2mn}}
         ~f_\lm(\x,p,\y) 
         &\triangleq 
        \frac{1}{m} \left\{\boldsymbol d^\top \x +2\|d\|_\infty\left(\y^\top\boldsymbol A \x -\mathbf b^\top \y \right)\right\} +\lm B_d(p,p^1)
        \end{align*}
The  gradient operator for $f(\x,p,\y)$ is  defined by
\begin{align}\label{eq:gradMPrec}
    G(\x, p, \y) = 
    \begin{pmatrix}
  \nabla_\x f \\
 \nabla_p f\\
 -\nabla_\y f 
    \end{pmatrix} = 
   \frac{1}{m} \begin{pmatrix}
   \boldsymbol d + 2\|d\|_\infty \boldsymbol A^\top \y \\
  -   2\|d\|_\infty \{[y_{i}]_{1...n}\}_{i=1}^m +\lm(\nabla d(p) - \nabla d(p^1)) \\
    2\|d\|_\infty(\boldsymbol A\x - \b)  
    \end{pmatrix},
\end{align}
where $[d(p)]_i = \frac{1}{a-1}\|p\|_a^{2-a}[p]_i^{a-1}$.

To get the complexity of MP we use the same reasons as in \cite{dvinskikh2020improved} with \eqref{eq:gradMPrec}.   The  total complexity  is
\[
\widetilde O\left(  \frac{ mn^2\sqrt{n} \|C\|_\infty}{\e'} \right)
\]
Then we use  Theorem  \ref{th_reg_ERM}
and get the exspressions for $m$, $\e'$ with $\lm = \e/(2{R_d}^2)$, where ${R_d}^2 =  B_d(p^*,p^1)$.
The number of measures is
 \[m = \frac{ 32 M_\infty^2 R_d^2}{\alpha \e^2} = \widetilde O\left(\frac{\|C\|_\infty^2}{\alpha \e^2}\right). \]
 The precision $\e'$ is defined as
\[\e' = \frac{\e^3}{64M_\infty^2 {R_d}^2} = O\left(\frac{\e^3}{\|C\|_\infty^2}\right).\]

\end{proof}

  \subsection{Comparison of the SA and the SAA for the WB Problem.}
   Now we compare the complexity bounds for the  SA and the SAA implementations solving problem  \eqref{def:population_unregFrech}.   Table \ref{Tab:OT} presents the total complexity for the numerical algorithms.

 { 
   \begin{table}[H]
     \caption{Total complexity of the SA and the SAA implementations for  $ \min\limits_{p\in \Delta_n}\E_q W(p,q)$. }
     \begin{center}
    \begin{tabular}{lll}\toprule
  \textbf{Algorithm}  & \textbf{Theorem} & \textbf{Complexity}   \\
        \midrule
          \makecell[l]{Projected SGD (SA) \\ \text{with }$\gamma = \frac{\e}{4 \log n}$}
       & \ref{Th:SAunreg} & $ \widetilde O \left(\frac{n^3\sqrt n  \|C\|^3_\infty}{\e^3 \sqrt{\e \kappa }}\right)$ \\
                  \midrule
        Stochastic MD (SA) & \ref{Th:MD} & $\widetilde O\left( \frac{n^3\|C\|^2_{\infty}}{\e^2}\right) $  \\
         \midrule
         \makecell[l]{Accelerated IBP (SAA) \\ \text{with }$\gamma = \frac{\e}{4 \log n}$}
     &  \ref{Th:SAAunreg} &
  $\widetilde{O}\left(\frac{n^4\|C\|^4_{\infty}}{\e^4}\right)$
     \\
         \midrule
     \makecell[l]{ Mirror Prox with $B_d(p^*,p^1)$\\   penalization (SAA)} & \ref{Th:newreg} & \ag{$\widetilde O\left(\frac{ n^{2}\sqrt{n}\|C\|^{5} _{\infty}}{\e^{5} }\right)$} \\
        \bottomrule
    \end{tabular}
    \label{Tab:OT}
   \end{center}
\end{table}
  }

   For the SA algorithms, which are Stochastic MD and Projected SGD,  we can conclude the following: non-regularized approach (Stochastic MD)    uses simplex prox structure and gets better complexity bounds, indeed Lipschitz constant in the $\ell_1$-norm is $M_\infty = O(\|C\|_\infty)$, whereas Lipschitz constant in the Euclidean norm is $M = O(\sqrt{n}\|C\|_\infty)$.  The practical comparison of Stochastic MD (Algorithm \ref{Alg:OnlineMD})  and Projected SGD (Algorithm \ref{Alg:OnlineGD}) can be found in  Figure \ref{fig:gausbarcomparison1}.  
   
   For the SAA approaches (Accelerated IBP and Mirror Prox with specific penalization) we enclose the following: entropy-regularized approach (Accelerated IBP) has better dependence on $\e$ than penalized approach (Mirror Prox with specific penalization), however, worse dependence   on $n$. Using Dual Extrapolation method for the WP problem from paper \cite{dvinskikh2020improved} instead of Mirror Prox allows to omit $\sqrt{n}$ in the penalized approach.

  
  One of the main advantages of the SAA approach is the  possibility to perform it in a decentralized manner 
 in contrast to the SA approach, which cannot be executed in a decentralized manner or even in distributed  or parallel fashion for non-smooth objective  \cite{gorbunov2019optimal}. This is the case of the Wasserstein barycenter problem, indeed,  the objective  is Lipschitz continuous but not Lipschitz smooth.

\section*{Acknowledgements}

The work   was 
supported by the Russian Science Foundation (project 18-71-10108), \url{https://rscf.ru/project/18-71-10108/}; and by the Ministry of Science and Higher Education of the Russian Federation (Goszadaniye) number 075-00337-20-03, project No. 0714-2020-0005.



\bibliographystyle{tfs}

\bibliography{references}

\begin{thebibliography}{10}
\providecommand{\MR}{\relax\unskip\space MR }
\providecommand{\url}[1]{\normalfont{#1}}
\providecommand{\urlprefix}{Available at }

\bibitem{agueh2011barycenters}
M. Agueh and G. Carlier, \emph{Barycenters in the wasserstein space}, SIAM
  Journal on Mathematical Analysis 43 (2011), pp. 904--924.

\bibitem{ahuja1993network}
R.K. Ahuja, T.L. Magnanti, and J.B. Orlin, \emph{Network flows: Theory},
  Algorithms, and Applications 526 (1993).

\bibitem{altschuler2017near-linear}
J. Altschuler, J. Weed, and P. Rigollet, \emph{Near-linear time approxfimation
  algorithms for optimal transport via sinkhorn iteration}, in \emph{Advances
  in Neural Information Processing Systems 30}, I. Guyon, U.V. Luxburg, S.
  Bengio, H. Wallach, R. Fergus, S. Vishwanathan, and R. Garnett, eds., Curran
  Associates, Inc.,  2017, pp. 1961--1971. arXiv:1705.09634.

\bibitem{arjovsky2017wasserstein}
M. Arjovsky, S. Chintala, and L. Bottou, \emph{Wasserstein {GAN}},
  arXiv:1701.07875  (2017).

\bibitem{ballu2020stochastic}
M. Ballu, Q. Berthet, and F. Bach, \emph{Stochastic optimization for
  regularized wasserstein estimators}, arXiv preprint arXiv:2002.08695  (2020).

\bibitem{ben-tal2001lectures}
A. Ben-Tal and A. Nemirovski, \emph{Lectures on Modern Convex Optimization.},
  Society for Industrial and Applied Mathematics, 2001,
  \urlprefix\url{http://epubs.siam.org/doi/abs/10.1137/1.9780898718829}.

\bibitem{benamou2015iterative}
J.D. Benamou, G. Carlier, M. Cuturi, L. Nenna, and G. Peyré, \emph{Iterative
  bregman projections for regularized transportation problems}, SIAM Journal on
  Scientific Computing 37 (2015), pp. A1111--A1138.

\bibitem{bigot2019central}
J. Bigot, E. Cazelles, and N. Papadakis, \emph{Central limit theorems for
  entropy-regularized optimal transport on finite spaces and statistical
  applications}  (2019).

\bibitem{bigot2019data}
J. Bigot, E. Cazelles, and N. Papadakis, \emph{Data-driven regularization of
  wasserstein barycenters with an application to multivariate density
  registration}, Information and Inference: A Journal of the IMA 8 (2019), pp.
  719--755.

\bibitem{bigot2019penalization}
J. Bigot, E. Cazelles, and N. Papadakis, \emph{Penalization of barycenters in
  the {W}asserstein space}, {SIAM} Journal on Mathematical Analysis 51 (2019),
  pp. 2261--2285.

\bibitem{bigot2018upper}
J. Bigot, R. Gouet, T. Klein, A. Lopez, \emph{et~al.}, \emph{Upper and lower
  risk bounds for estimating the wasserstein barycenter of random measures on
  the real line}, Electronic journal of statistics 12 (2018), pp. 2253--2289.

\bibitem{Bigot2012a}
J. Bigot and T. Klein, \emph{Characterization of barycenters in the wasserstein
  space by averaging optimal transport maps}, arXiv:1212.2562  (2012).

\bibitem{bigot2012consistent}
J. Bigot, T. Klein, \emph{et~al.}, \emph{Consistent estimation of a population
  barycenter in the {W}asserstein space}, ArXiv e-prints  (2012).

\bibitem{blanchet2018towards}
J. Blanchet, A. Jambulapati, C. Kent, and A. Sidford, \emph{Towards optimal
  running times for optimal transport}, arXiv preprint arXiv:1810.07717
  (2018).

\bibitem{boissard2015distribution}
E. Boissard, T. Le~Gouic, J.M. Loubes, \emph{et~al.}, \emph{Distribution’s
  template estimate with wasserstein metrics}, Bernoulli 21 (2015), pp.
  740--759.

\bibitem{carlier2021linear}
G. Carlier, \emph{On the linear convergence of the multi-marginal sinkhorn
  algorithm}  (2021).

\bibitem{chewi2020gradient}
S. Chewi, T. Maunu, P. Rigollet, and A.J. Stromme, \emph{Gradient descent
  algorithms for bures-wasserstein barycenters}, arXiv preprint
  arXiv:2001.01700  (2020).

\bibitem{cuturi2013sinkhorn}
M. Cuturi, \emph{Sinkhorn distances: Lightspeed computation of optimal
  transport}, in \emph{Advances in Neural Information Processing Systems 26},
  C.J.C. Burges, L. Bottou, M. Welling, Z. Ghahramani, and K.Q. Weinberger,
  eds., Curran Associates, Inc.,  2013, pp. 2292--2300.
  \urlprefix\url{http://papers.nips.cc/paper/4927-sinkhorn-distances-lightspeed-computation-of-optimal-transport.pdf}.

\bibitem{cuturi2016smoothed}
M. Cuturi and G. Peyr{\'e}, \emph{A smoothed dual approach for variational
  wasserstein problems}, SIAM Journal on Imaging Sciences 9 (2016), pp.
  320--343.

\bibitem{dadush2018friendly}
D. Dadush and S. Huiberts, \emph{A friendly smoothed analysis of the simplex
  method}, in \emph{Proceedings of the 50th Annual ACM SIGACT Symposium on
  Theory of Computing}. ACM, 2018, pp. 390--403.

\bibitem{del2019robust}
E. Del~Barrio, J.A. Cuesta-Albertos, C. Matr{\'a}n, and A. Mayo-{\'I}scar,
  \emph{Robust clustering tools based on optimal transportation}, Statistics
  and Computing 29 (2019), pp. 139--160.

\bibitem{delon2020wasserstein}
J. Delon and A. Desolneux, \emph{A {W}asserstein-type distance in the space of
  {G}aussian mixture models}, SIAM Journal on Imaging Sciences 13 (2020), pp.
  936--970.

\bibitem{dong2020study}
Y. Dong, Y. Gao, R. Peng, I. Razenshteyn, and S. Sawlani, \emph{A study of
  performance of optimal transport}, arXiv preprint arXiv:2005.01182  (2020).

\bibitem{duchi2008efficient}
J. Duchi, S. Shalev-Shwartz, Y. Singer, and T. Chandra, \emph{Efficient
  projections onto the l 1-ball for learning in high dimensions}, in
  \emph{Proceedings of the 25th international conference on Machine learning}.
  2008, pp. 272--279.

\bibitem{dvinskikh2020improved}
D. Dvinskikh and D. Tiapkin, \emph{Improved complexity bounds in wasserstein
  barycenter problem}, arXiv preprint arXiv:2010.04677  (2020).

\bibitem{dvurechensky2018computational}
P. Dvurechensky, A. Gasnikov, and A. Kroshnin, \emph{Computational Optimal
  Transport: Complexity by Accelerated Gradient Descent Is Better Than by
  {S}inkhorn’s Algorithm}, in \emph{Proceedings of the 35th International
  Conference on Machine Learning}, J. Dy and A. Krause, eds., Vol.~80. 2018,
  pp. 1367--1376. arXiv:1802.04367.

\bibitem{ebert2017construction}
J. Ebert, V. Spokoiny, and A. Suvorikova, \emph{Construction of non-asymptotic
  confidence sets in 2-{W}asserstein space}, arXiv:1703.03658  (2017).

\bibitem{feldman2019high}
V. Feldman and J. Vondr{\'a}k, \emph{High probability generalization bounds for
  uniformly stable algorithms with nearly optimal rate}, arXiv preprint
  arXiv:1902.10710  (2019).

\bibitem{franklin1989scaling}
J. Franklin and J. Lorenz, \emph{On the scaling of multidimensional matrices},
  Linear Algebra and its Applications 114 (1989), pp. 717 -- 735.
  \urlprefix\url{http://www.sciencedirect.com/science/article/pii/0024379589904904},
  Special Issue Dedicated to Alan J. Hoffman.

\bibitem{frechet1948elements}
M. Fr{\'e}chet, \emph{Les {\'e}l{\'e}ments al{\'e}atoires de nature quelconque
  dans un espace distanci{\'e}}, in \emph{Annales de l'institut Henri
  Poincar{\'e}}, Vol.~10. 1948, pp. 215--310.

\bibitem{frogner2015learning}
C. Frogner, C. Zhang, H. Mobahi, M. Araya, and T.A. Poggio, \emph{Learning with
  a Wasserstein loss}, in \emph{Advances in Neural Information Processing
  Systems}. 2015, pp. 2053--2061.

\bibitem{gabow1991faster}
H.N. Gabow and R.E. Tarjan, \emph{Faster scaling algorithms for general graph
  matching problems}, Journal of the ACM (JACM) 38 (1991), pp. 815--853.

\bibitem{gasnikov2016gradient-free}
A.V. Gasnikov, A.A. Lagunovskaya, I.N. Usmanova, and F.A. Fedorenko,
  \emph{Gradient-free proximal methods with inexact oracle for convex
  stochastic nonsmooth optimization problems on the simplex}, Automation and
  Remote Control 77 (2016), pp. 2018--2034.
  \urlprefix\url{http://dx.doi.org/10.1134/S0005117916110114}, arXiv:1412.3890.

\bibitem{gasnikov2015universal}
A. Gasnikov, P. Dvurechensky, D. Kamzolov, Y. Nesterov, V. Spokoiny, P.
  Stetsyuk, A. Suvorikova, and A. Chernov, \emph{Universal method with inexact
  oracle and its applications for searching equillibriums in multistage
  transport problems}, arXiv preprint arXiv:1506.00292  (2015).

\bibitem{genevay2017learning}
A. Genevay, G. Peyr{\'e}, and M. Cuturi, \emph{Learning generative models with
  sinkhorn divergences}, arXiv preprint arXiv:1706.00292  (2017).

\bibitem{gorbunov2019optimal}
E. Gorbunov, D. Dvinskikh, and A. Gasnikov, \emph{Optimal decentralized
  distributed algorithms for stochastic convex optimization}, arXiv preprint
  arXiv:1911.07363  (2019).

\bibitem{gouic2019fast}
T.L. Gouic, Q. Paris, P. Rigollet, and A.J. Stromme, \emph{Fast convergence of
  empirical barycenters in alexandrov spaces and the wasserstein space}, arXiv
  preprint arXiv:1908.00828  (2019).

\bibitem{gramfort2015fast}
A. Gramfort, G. Peyr{\'e}, and M. Cuturi, \emph{Fast optimal transport
  averaging of neuroimaging data}, in \emph{International Conference on
  Information Processing in Medical Imaging}. Springer, 2015, pp. 261--272.

\bibitem{guigues2017non-asymptotic}
V. Guigues, A. Juditsky, and A. Nemirovski, \emph{Non-asymptotic confidence
  bounds for the optimal value of a stochastic program}, Optimization Methods
  and Software 32 (2017), pp. 1033--1058.
  \urlprefix\url{https://doi.org/10.1080/10556788.2017.1350177}.

\bibitem{guminov2019accelerated}
S. Guminov, P. Dvurechensky, N. Tupitsa, and A. Gasnikov, \emph{Accelerated
  alternating minimization}, arXiv preprint arXiv:1906.03622  (2019).

\bibitem{hazan2016introduction}
E. Hazan, \emph{et~al.}, \emph{Introduction to online convex optimization},
  Foundations and Trends{\textregistered} in Optimization 2 (2019), pp.
  157--325.

\bibitem{jambulapati2019direct}
A. Jambulapati, A. Sidford, and K. Tian, \emph{A direct
  $\tilde{O}(1/\varepsilon)$ iteration parallel algorithm for optimal
  transport}, in \emph{Advances in Neural Information Processing Systems}.
  2019, pp. 11359--11370.

\bibitem{juditsky2012first-order}
A. Juditsky and A. Nemirovski, \emph{First order methods for non-smooth convex
  large-scale optimization, i: General purpose methods}, in \emph{Optimization
  for Machine Learning}, S.W. Suvrit~Sra Sebastian~Nowozin, ed., Cambridge, MA:
  MIT Press,  2012, pp. 121--184.

\bibitem{juditsky2019unifying}
A. Juditsky, J. Kwon, and {\'E}. Moulines, \emph{Unifying mirror descent and
  dual averaging}, arXiv preprint arXiv:1910.13742  (2019).

\bibitem{jud08}
A. Juditsky, P. Rigollet, A.B. Tsybakov, \emph{et~al.}, \emph{Learning by
  mirror averaging}, The Annals of Statistics 36 (2008), pp. 2183--2206.

\bibitem{kakade2009generalization}
S.M. Kakade and A. Tewari, \emph{On the generalization ability of online
  strongly convex programming algorithms}, in \emph{Advances in Neural
  Information Processing Systems}. 2009, pp. 801--808.

\bibitem{klatt2020empirical}
M. Klatt, C. Tameling, and A. Munk, \emph{Empirical regularized optimal
  transport: Statistical theory and applications}, SIAM Journal on Mathematics
  of Data Science 2 (2020), pp. 419--443.

\bibitem{klochkov2021stability}
Y. Klochkov and N. Zhivotovskiy, \emph{Stability and deviation optimal risk
  bounds with convergence rate $ o (1/n) $}, arXiv preprint arXiv:2103.12024
  (2021).

\bibitem{kroshnin2019statistical}
A. Kroshnin, V. Spokoiny, and A. Suvorikova, \emph{Statistical inference for
  bures-wasserstein barycenters}, arXiv preprint arXiv:1901.00226  (2019).

\bibitem{kroshnin2019complexity}
A. Kroshnin, N. Tupitsa, D. Dvinskikh, P. Dvurechensky, A. Gasnikov, and C.
  Uribe, \emph{On the Complexity of Approximating {W}asserstein Barycenters},
  in \emph{Proceedings of the 36th International Conference on Machine
  Learning}, K. Chaudhuri and R. Salakhutdinov, eds., Vol.~97. 2019, pp.
  3530--3540. arXiv:1901.08686.

\bibitem{kusner2015word}
M. Kusner, Y. Sun, N. Kolkin, and K. Weinberger, \emph{From word embeddings to
  document distances}, in \emph{International conference on machine learning}.
  PMLR, 2015, pp. 957--966.

\bibitem{LeGouic2017}
T. Le~Gouic and J.M. Loubes, \emph{Existence and consistency of wasserstein
  barycenters}, Probability Theory and Related Fields 168 (2017), pp. 901--917.

\bibitem{lee2014path}
Y.T. Lee and A. Sidford, \emph{Path Finding Methods for Linear Programming:
  Solving Linear Programs in $\tilde{O}(\sqrt{\text{rank}})$ Iterations and
  Faster Algorithms for Maximum Flow}, in \emph{2014 IEEE 55th Annual Symposium
  on Foundations of Computer Science}, Oct. 2014, pp. 424--433.

\bibitem{nemirovski2009robust}
A. Nemirovski, A. Juditsky, G. Lan, and A. Shapiro, \emph{Robust stochastic
  approximation approach to stochastic programming}, SIAM Journal on
  Optimization 19 (2009), pp. 1574--1609.
  \urlprefix\url{https://doi.org/10.1137/070704277}.

\bibitem{nesterov2009primal-dual}
Y. Nesterov, \emph{Primal-dual subgradient methods for convex problems},
  Mathematical Programming 120 (2009), pp. 221--259.
  \urlprefix\url{https://doi.org/10.1007/s10107-007-0149-x}, First appeared in
  2005 as CORE discussion paper 2005/67.

\bibitem{orabona2019modern}
F. Orabona, \emph{A modern introduction to online learning}, arXiv preprint
  arXiv:1912.13213  (2019).

\bibitem{panaretos2019statistical}
V.M. Panaretos and Y. Zemel, \emph{Statistical aspects of wasserstein
  distances}, Annual review of statistics and its application 6 (2019), pp.
  405--431.

\bibitem{peyre2019computational}
G. Peyr{\'e}, M. Cuturi, \emph{et~al.}, \emph{Computational optimal transport},
  Foundations and Trends{\textregistered} in Machine Learning 11 (2019), pp.
  355--607.

\bibitem{quanrud2018approximating}
K. Quanrud, \emph{Approximating optimal transport with linear programs}, arXiv
  preprint arXiv:1810.05957  (2018).

\bibitem{rabin2015convex}
J. Rabin and N. Papadakis, \emph{Convex color image segmentation with optimal
  transport distances}, in \emph{International Conference on Scale Space and
  Variational Methods in Computer Vision}. Springer, 2015, pp. 256--269.

\bibitem{rabin2011wasserstein}
J. Rabin, G. Peyr{\'e}, J. Delon, and M. Bernot, \emph{{W}asserstein barycenter
  and its application to texture mixing}, in \emph{International Conference on
  Scale Space and Variational Methods in Computer Vision}. Springer, 2011, pp.
  435--446.

\bibitem{rachev2011probability}
S.T. Rachev, S.V. Stoyanov, and F.J. Fabozzi, \emph{A probability metrics
  approach to financial risk measures}, John Wiley \& Sons, 2011.

\bibitem{rios2018bayesian}
G. Rios, J. Backhoff-Veraguas, J. Fontbona, and F. Tobar, \emph{Bayesian
  learning with wasserstein barycenters}, ArXiv e-prints  (2018).

\bibitem{robbins1951stochastic}
H. Robbins and S. Monro, \emph{A stochastic approximation method}, The annals
  of mathematical statistics  (1951), pp. 400--407.

\bibitem{rolet2016fast}
A. Rolet, M. Cuturi, and G. Peyr{\'e}, \emph{Fast dictionary learning with a
  smoothed Wasserstein loss}, in \emph{Artificial Intelligence and Statistics}.
  2016, pp. 630--638.

\bibitem{rubner1998metric}
Y. Rubner, C. Tomasi, and L.J. Guibas, \emph{A metric for distributions with
  applications to image databases}, in \emph{Sixth International Conference on
  Computer Vision (IEEE Cat. No. 98CH36271)}. IEEE, 1998, pp. 59--66.

\bibitem{shalev2014understanding}
S. Shalev-Shwartz and S. Ben-David, \emph{Understanding machine learning: From
  theory to algorithms}, Cambridge university press, 2014.

\bibitem{shalev2009stochastic}
S. Shalev-Shwartz, O. Shamir, N. Srebro, and K. Sridharan, \emph{Stochastic
  Convex Optimization.}, in \emph{COLT}. 2009.

\bibitem{shapiro2014lectures}
A. Shapiro, D. Dentcheva, and A. Ruszczyński, \emph{Lectures on Stochastic
  Programming}, Society for Industrial and Applied Mathematics, 2009,
  \urlprefix\url{http://epubs.siam.org/doi/abs/10.1137/1.9780898718751}.

\bibitem{shapiro2005complexity}
A. Shapiro and A. Nemirovski, \emph{On complexity of stochastic programming
  problems}, in \emph{Continuous optimization}, Springer,  2005, pp. 111--146.

\bibitem{Solomon2015}
J. Solomon, F. De~Goes, G. Peyr{\'e}, M. Cuturi, A. Butscher, A. Nguyen, T. Du,
  and L. Guibas, \emph{Convolutional wasserstein distances: Efficient optimal
  transportation on geometric domains}, ACM Transactions on Graphics (TOG) 34
  (2015), p.~66.

\bibitem{sommerfeld2018inference}
M. Sommerfeld and A. Munk, \emph{Inference for empirical {W}asserstein
  distances on finite spaces}, Journal of the Royal Statistical Society: Series
  B (Statistical Methodology) 80 (2018), p. 219–238.

\bibitem{spokoiny2012parametric}
V. Spokoiny, \emph{et~al.}, \emph{Parametric estimation. finite sample theory},
  The Annals of Statistics 40 (2012), pp. 2877--2909.

\bibitem{srivastava2015wasp}
S. Srivastava, V. Cevher, Q. Dinh, and D. Dunson, \emph{WASP: Scalable Bayes
  via barycenters of subset posteriors}, in \emph{Artificial Intelligence and
  Statistics}. PMLR, 2015, pp. 912--920.

\bibitem{stonyakin2019gradient}
F.S. Stonyakin, D. Dvinskikh, P. Dvurechensky, A. Kroshnin, O. Kuznetsova, A.
  Agafonov, A. Gasnikov, A. Tyurin, C.A. Uribe, D. Pasechnyuk, and S.
  Artamonov, \emph{Gradient Methods for Problems with Inexact Model of the
  Objective}, in \emph{Mathematical Optimization Theory and Operations
  Research}, M. Khachay, Y. Kochetov, and P. Pardalos, eds., Cham. Springer
  International Publishing, 2019, pp. 97--114. arXiv:1902.09001.

\bibitem{thorpe2017transportation}
M. Thorpe, S. Park, S. Kolouri, G.K. Rohde, and D. Slep{v{c}}ev, \emph{A
  transportation $ {L}^{p}$ distance for signal analysis}, Journal of
  mathematical imaging and vision 59 (2017), pp. 187--210.

\bibitem{wang2010optimal}
W. Wang, J.A. Ozolek, D. Slepcev, A.B. Lee, C. Chen, and G.K. Rohde, \emph{An
  optimal transportation approach for nuclear structure-based pathology}, IEEE
  transactions on medical imaging 30 (2010), pp. 621--631.

\bibitem{weed2019sharp}
J. Weed, F. Bach, \emph{et~al.}, \emph{Sharp asymptotic and finite-sample rates
  of convergence of empirical measures in {W}asserstein distance}, Bernoulli 25
  (2019), pp. 2620--2648.

\end{thebibliography}

\end{document}